\newtheorem{theorem}{Theorem}[section]
\newtheorem{lemma}[theorem]{Lemma}
\theoremstyle{definition}
\newtheorem{proposition}[theorem]{Proposition}
\newtheorem{corollary}[theorem]{Corollary}
\theoremstyle{remark}
\numberwithin{equation}{section}
\begin{document}

\title[]{A similarity invariant of a class of
$n$-normal operators in terms of $K$-theory}

\author[C. Jiang]{Chunlan Jiang}
\address{Chunlan Jiang\\
Department of Mathematics, Hebei Normal University, Hebei,
050024, P. R. China} \email{cljiang@hebtu.edu.cn}


\author[R. Shi]{Rui Shi$^{\ast}$}


\address{Rui Shi\\
}

\curraddr{
}
\email{ruishi.math@gmail.com}
\thanks{$^{\ast}${Corresponding author}.}

\subjclass[2000]{Primary 47A65, 47A67; Secondary 47A15, 47C15}



\keywords{Strongly irreducible operator, similarity invariant,
reduction theory of von Neumann algebras, $K$-theory}

\begin{abstract}
In this paper, we prove an analogue of the Jordan canonical form
theorem for a class of $n$-normal operators on complex separable
Hilbert spaces in terms of von Neumann's reduction theory. This is a
continuation of our study of bounded linear operators, the
commutants of which contain bounded maximal abelian set of
idempotents. Furthermore, we give a complete similarity invariant
for this class of operators by $K$-theory for Banach algebras.
\end{abstract}

\maketitle

\section{Introduction} 

In this paper the authors continue the study on generalizing the
Jordan canonical form theorem for bounded linear operators on
separable Hilbert spaces, which was initiated in \cite{Shi_1} and
carried on in \cite{Shi_2}. Throughout this article, we only discuss
Hilbert spaces which are {\textit{complex and separable}}. Denote by
$\mathscr{L}(\mathscr{H})$ the set of bounded linear operators on a
Hilbert space $\mathscr{H}$. An \textit{idempotent} $P$ on
$\mathscr{H}$ is an operator in $\mathscr{L}(\mathscr{H})$ such that
$P^{2}=P$. A {\textit{projection}} $Q$ in $\mathscr{L}(\mathscr{H})$
is an idempotent such that $Q=Q^{*}$. An operator $A$ in $\mathscr
{L}(\mathscr {H})$ is said to be {\textit{irreducible}} if its
commutant $\{A\}'\triangleq\{B\in\mathscr {L}(\mathscr {H}):AB=BA\}$
contains no projections other than $0$ and the identity operator $I$
on $\mathscr{H}$, introduced by P. Halmos in \cite{Halmos}. (The
separability assumption is necessary because on a nonseparable
Hilbert space every operator is reducible.) An operator $A$ in
$\mathscr {L}(\mathscr {H})$ is said to be {\textit{strongly
irreducible}} if $XAX^{-1}$ is irreducible for every invertible
operator $X$ in $\mathscr {L}(\mathscr {H})$, introduced by F.
Gilfeather in \cite{Gilfeather}. This shows that the commutant of a
strongly irreducible operator contains no idempotents other than $0$
and $I$. We observe that strong irreducibility stays invariant up to
similar equivalence while irreducibility is an invariant up to
unitary equivalence. For an operator $A$ in $\mathscr {L}(\mathscr
{H})$, a nonzero idempotent $P$ in $\{A\}^{\prime}$ is said to be
{\textit{minimal}} if every idempotent $Q$ in
$\{A\}^{\prime}\cap\{P\}^{\prime}$ satisfies $QP=P$ or $QP=0$. For a
minimal idempotent $P$ in $\{A\}^{\prime}$, the restriction
$A|^{}_{\textrm{ran} P}$ is strongly irreducible on $\textrm{ran}
P$. For $n$ in $\mathbb {N}\cup\{\infty\}$, we write $\mathscr
{H}^{(n)}_{}$ for the orthogonal direct sum of $n$ copies of
$\mathscr {H}$, where we denote by $\mathbb {N}$ the set of positive
integers. For an operator $T$ in $\mathscr {L}(\mathscr {H})$ and
$n$ in $\mathbb {N}\cup\{\infty\}$, the orthogonal direct sum of $T$
with itself $n$ times is denoted by $T^{(n)}_{}$. Let $\mathscr {T}$
be a subset of $\mathscr {L}(\mathscr {H})$. Then we write $\mathscr
{T}^{(n)}_{}$ for $\{T^{(n)}\in\mathscr {L}(\mathscr
{H}^{(n)}_{}):T\in\mathscr {T}\}$ and $\mathscr {T}^{\prime}$ for
the commutant of $\mathscr {T}$.

On a finite dimensional Hilbert space $\mathscr{K}$, the Jordan
canonical form theorem shows that every operator $B$ in $\mathscr
{L}(\mathscr {K})$ can be uniquely written as a (Banach) direct sum
of Jordan blocks up to similarity. An important observation is that
for any two bounded maximal abelian sets of idempotents
$\mathscr{Q}$ and $\mathscr{P}$ in the commutant $\{B\}^{\prime}$,
there exists an invertible operator $X$ in $\{B\}^{\prime}$ such
that
$$X\mathscr{Q}X^{-1}=\mathscr{P}.\eqno{(1.1)}$$ Thus, we obtain
$K^{}_{0}(\{B\}^{\prime})\cong\mathbb{Z}^{(k)}_{}$ and
$K^{}_{1}(\{B\}^{\prime})\cong{0}$ by a routine computation, where
we let $k$ denote the number of minimal idempotents in
$\mathscr{P}$. Furthermore, the ordered $K^{}_{0}$ groups can be
viewed as a complete similarity-invariant in the following way. Let
$B^{}_{1}$ and $B^{}_{2}$ be in $\mathscr {L}(\mathscr {K})$ such
that
$$\theta^{}_{1}(K^{}_{0}(\{B^{}_{1}\}^{\prime}))=\theta^{}_{2}(K^{}_{0}(\{B^{}_{1}\oplus
B^{}_{2}\}^{\prime}))=\mathbb{Z}^{(k)}_{},\eqno{(1.2)}$$ and
$\theta^{}_{1}([I^{}_{\{B^{}_{1}\}^{\prime}}])=n^{}_{1}e^{}_{1}+\cdots+n^{}_{k}e^{}_{k}$
and $\theta^{}_{2}([I^{}_{\{B^{}_{1}\oplus
B^{}_{2}\}^{\prime}}])=2n^{}_{1}e^{}_{1}+\cdots+2n^{}_{k}e^{}_{k}$,
where $\theta^{}_{1}$ and $\theta^{}_{2}$ are group isomorphisms
essentially induced by the standard traces of matrices and
$\{e^{}_{i}\}^{k}_{i=1}$ are the generators of the semigroup
$\mathbb{N}^{(k)}_{}$ of $\mathbb{Z}^{(k)}_{}$ and
$I^{}_{\{B^{}_{1}\}^{\prime}}$ is the unit of
$\{B^{}_{1}\}^{\prime}$, then $B^{}_{1}$ is similar to $B^{}_{2}$.
The reader is referred to Chapter $2$ of \cite{Jiang_5} for the
details skipped above.

In our first attempt to prove an analogue of the Jordan canonical
form theorem in \cite{Shi_1}, we observe that minimal idempotents in
$\{A\}^{\prime}$, for $A\in\mathscr {L}(\mathscr {H})$, play an
important role in the construction of the Jordan canonical form of
$A$. However, for a single self-adjoint generator $N$ of a diffuse
masa, the commutant $\{N\}^{\prime}$ contains no minimal
idempotents. This fact shows us that, on considering a
generalization of the Jordan canonical form theorem, direct sums of
Jordan blocks need to be replaced by direct integrals of strongly
irreducible operators with regular Borel measures to represent
certain operators in $\mathscr {L}(\mathscr {H})$.

We briefly introduce some concepts in the von Neumann's reduction
theory that will be employed in this paper. For the most part, we
follow \cite{Azoff_2,Schwartz}. Once and for all, Let $\mathscr
{H}^{}_{1}\subset\mathscr {H}^{}_{2}\subset\cdots\subset\mathscr
{H}^{}_{\infty}$ be a sequence of Hilbert spaces with $\mathscr
{H}^{}_{n}$ having dimension $n$ and $\mathscr {H}^{}_{\infty}$
spanned by the remaining $\mathscr {H}^{}_n$s. Let $\mu$ be (the
completion of) a finite positive regular Borel measure supported on
a compact subset $\Lambda$ of $\mathbb {R}$. (We realize this by
virtue of (\cite{Rosenthal}, Theorem $7.12$).) And let
$\{\Lambda^{}_{\infty}\}\cup\{\Lambda^{}_{n}\}^{\infty}_{n=1}$ be a
Borel partition of $\Lambda$. Then we form the associated direct
integral Hilbert space $$\mathscr {H}=\int^{\oplus}_{\Lambda}
\mathscr {H}(\lambda) d\mu(\lambda) \eqno{(1.3)}$$ which consists of
all (equivalence classes of) measurable functions $f$ and $g$ from
$\Lambda$ into $\mathscr {H}^{}_{\infty}$ such that
\begin{enumerate}
\item[(1)] $f(\lambda)\in \mathscr {H}(\lambda)\equiv \mathscr
{H}^{}_{n}$
for
$\lambda\in \Lambda^{}_{n}$;
\item[(2)] $\|f\|^{2}\triangleq\int^{}_{\Lambda}\|f(\lambda)\|^{2}
d\mu(\lambda)<\infty$;
\item[(3)] $(f,g)\triangleq\int^{}_{\Lambda}(f(\lambda),g(\lambda))
d\mu(\lambda)$.
\end{enumerate}
The element in $\mathscr {H}$ represented by the function
$\lambda\rightarrow f(\lambda)$ is denoted by
$\int^{\oplus}_{\Lambda}f(\lambda) d\mu(\lambda)$. An operator $A$
in $\mathscr {L}(\mathscr {H})$ is said  to be
{\textit{decomposable}} if there exists a strongly $\mu$-measurable
operator-valued function $A(\cdot)$ defined on $\Lambda$ such that
$A(\lambda)$ is an operator in $\mathscr {L}(\mathscr {H}(\lambda))$
and $(Af)(\lambda)=A(\lambda)f(\lambda)$, for all $f\in \mathscr
{H}$. We write
$A\equiv\int^{\oplus}_{\Lambda}A(\lambda)d\mu(\lambda)$ for the
equivalence class corresponding to $A(\cdot)$.  If $A(\lambda)$ is a
scalar multiple of the identity on $\mathscr {H}(\lambda)$ for
almost all $\lambda$, then $A$ is said to be {\textit{diagonal}}.
The collection of all diagonal operators is said to be the
{\textit{diagonal algebra}} of $\Lambda$. It is an abelian von
Neumann algebra. A decomposable operator $A$ in $\mathscr
{L}(\mathscr {H})$ is essentially a direct sum of $n$-normal
operators with respect to $n$. Let $\Lambda=\Lambda^{}_{n}$ and $A$
in $\mathscr {L}(\mathscr {H})$ be decomposable, then $A$ is
$n$-normal. An operator $A$ in $\mathscr {L}(\mathscr {H})$ is said
to be {\textit{$n$-normal}}, if there exists a unitary operator $U$
from $\mathscr {H}$ to $(L^{2}(\nu))^{(n)}$ such that
$$UAU^{*}=\begin{pmatrix}
M^{}_{f^{}_{11}}&\cdots&M^{}_{f^{}_{1n}}\\
\vdots&\ddots&\vdots\\
M^{}_{f^{}_{n1}}&\cdots&M^{}_{f^{}_{nn}}\\
\end{pmatrix}^{}_{n\times n}
\begin{matrix}
L^{2}(\nu)\\
\vdots\\
L^{2}(\nu)\\
\end{matrix} \eqno{(1.4)}$$ where $\nu$ is a finite positive regular
Borel measure supported on a compact subset $\Gamma$ of $\mathbb
{C}$ and $M^{}_{f^{}_{ij}}$ is a Multiplication operator for
$f^{}_{ij}$ in $L^{\infty}(\nu)$ and $1\leq i,j\leq n$. In the sense
of direct integral decomposition, the operator $UAU^{*}$ is in the
form
$$UAU^{*}=\int^{\oplus}_{\Gamma}\begin{pmatrix}
f^{}_{11}(\lambda)&\cdots&f^{}_{1n}(\lambda)\\
\vdots&\ddots&\vdots\\
f^{}_{n1}(\lambda)&\cdots&f^{}_{nn}(\lambda)\\
\end{pmatrix}d\nu(\lambda). \eqno{(1.5)}$$ Furthermore, by virtue of
(\cite{Azoff_1}, Corollary $2$), for every $n$-normal operator $A$
on $(L^{2}(\nu))^{(n)}$ and positive integer $n<\infty$, there
exists an $n$-normal unitary operator $U$ on $(L^{2}(\nu))^{(n)}$
such that $UAU^{*}$ is an upper triangular $n$-normal operator, i.e.
$$UAU^{*}=\begin{pmatrix}
M^{}_{f^{}_{11}}&M^{}_{f^{}_{12}}&\cdots&M^{}_{f^{}_{1n}}\\
0&M^{}_{f^{}_{22}}&\cdots&M^{}_{f^{}_{2n}}\\
\vdots&\vdots&\ddots&\vdots\\
0&0&\cdots&M^{}_{f^{}_{nn}}\\
\end{pmatrix}^{}_{n\times n}\begin{matrix}
L^{2}(\nu)\\
L^{2}(\nu)\\
\vdots\\
L^{2}(\nu)\\
\end{matrix}. \eqno{(1.6)}$$ The following two
basic results will be used in the sequel:
\begin{enumerate}
\item An operator acting on a direct integral of Hilbert spaces is
decomposable if and only if it commutes with the corresponding
diagonal algebra (\cite{Schwartz}, p. $22$).
\item Every abelian von Neumann algebra is (unitarily equivalent to)
an essentially unique diagonal algebra (\cite{Schwartz}, p. $19$).
\end{enumerate}

By the above observation of the Jordan canonical form theorem, our
first question is whether the commutant $\{A\}^{\prime}$ contains a
bounded maximal abelian set of idempotents for every operator $A$ in
$\mathscr {L}(\mathscr {H})$. In \cite{Shi_1}, we gave a negative
answer by constructing two operators $A$ and $B$ in the forms
$$A=\bigoplus\limits^{\infty}_{i=1}A^{}_{i},\,
A^{}_{i}=\begin{pmatrix}
\frac{1}{i}&1\\
0&-\frac{1}{2i}
\end{pmatrix}\in M^{}_{2}(\mathbb{C}),\mbox{ and }
B=\begin{pmatrix}
N^{}_{\mu}&I\\
0&-\frac{1}{2}N^{}_{\mu}\end{pmatrix}
\begin{matrix}
L^2(\mu)\\
L^2(\mu)\\
\end{matrix}, \eqno{(1.7)}$$
where the multiplication operator $N^{}_{\mu}$ is defined on
$L^{2}(\mu)$ by $N^{}_{\mu}f=z\cdot f$ for each $f$ in $L^{2}(\mu)$
and $\mu$ is a finite regular Borel measure supported on a compact
subset of $\mathbb {C}$. (And it is well known that the normal
operator $N^{}_{\mu}$ is star-cyclic.) Furthermore, we proved the
following theorem:

\begin{theorem}[\cite{Shi_1}, Theorem 1.2]
An operator $A$ in $\mathscr {L}(\mathscr {H})$ is similar to a
direct integral of strongly irreducible operators if and only if its
commutant $\{A\}^{\prime}$ contains a bounded maximal abelian set of
idempotents.
\end{theorem}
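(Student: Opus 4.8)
\emph{Proof proposal.} The statement is a biconditional, and the plan is to prove the two implications separately, the forward one being essentially formal and the reverse one carrying the analytic weight. For ``$\Rightarrow$'', write $A=T^{-1}BT$ with $T$ invertible and $B=\int^{\oplus}_{\Lambda}A(\lambda)\,d\mu(\lambda)$, each $A(\lambda)$ strongly irreducible. Let $\mathscr{D}$ be the diagonal algebra of this decomposition and $\mathscr{P}^{}_{0}$ the set of all projections in $\mathscr{D}$. Since $\mathscr{D}\subseteq\{B\}'$, the family $\mathscr{P}^{}_{0}$ is a norm-bounded abelian set of idempotents in $\{B\}'$, and I would verify maximality as follows: if an idempotent $Q\in\{B\}'$ commutes with every member of $\mathscr{P}^{}_{0}$, then it commutes with the von Neumann algebra $\mathscr{D}=\mathscr{P}^{\prime\prime}_{0}$, hence by the first of the two basic facts recalled above $Q$ is decomposable, $Q=\int^{\oplus}_{\Lambda}Q(\lambda)\,d\mu(\lambda)$ with $Q(\lambda)$ an idempotent in $\{A(\lambda)\}'$ for a.e.\ $\lambda$; strong irreducibility forces $Q(\lambda)\in\{0,I^{}_{\mathscr{H}(\lambda)}\}$ a.e., so $Q$ is diagonal and $Q\in\mathscr{P}^{}_{0}$. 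Transporting $\mathscr{P}^{}_{0}$ back by $X\mapsto T^{-1}XT$ then gives a bounded maximal abelian set of idempotents in $\{A\}'$, conjugation being a commutation-preserving bijection between the idempotents of $\{B\}'$ and those of $\{A\}'$ that multiplies norms by at most $\|T\|\,\|T^{-1}\|$.

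For ``$\Leftarrow$'', let $\mathscr{P}\subseteq\{A\}'$ be a bounded maximal abelian set of idempotents. First, maximality makes $\mathscr{P}$ a Boolean algebra of idempotents: for $P,Q\in\mathscr{P}$ the idempotents $I-P$ and $PQ$ commute with all of $\mathscr{P}$, so they could be adjoined without destroying commutativity and hence already lie in $\mathscr{P}$, whence $P\vee Q=P+Q-PQ\in\mathscr{P}$ too. Next, using the boundedness $\sup^{}_{P\in\mathscr{P}}\|P\|<\infty$ in an essential way, I would pass to an invertible $X\in\mathscr{L}(\mathscr{H})$ with $X\mathscr{P}X^{-1}$ a family of self-adjoint projections: for a finite Boolean algebra with atoms $E^{}_{1},\dots,E^{}_{k}$ one may take $X=\bigl(\sum^{}_{i}E^{*}_{i}E^{}_{i}\bigr)^{1/2}$, and the general (separable) case follows by integrating this device over the Boolean algebra. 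Replacing $A$ by $XAX^{-1}$ and $\mathscr{P}$ by $X\mathscr{P}X^{-1}$ we may assume $\mathscr{P}$ consists of projections; put $\mathscr{M}=\mathscr{P}^{\prime\prime}$, an abelian von Neumann algebra (abelian since commuting projections generate an abelian algebra). Since $A\in\mathscr{P}'=\mathscr{P}^{\prime\prime\prime}=\mathscr{M}'$, the operator $A$ commutes with $\mathscr{M}$; by the second basic fact $\mathscr{M}$ is unitarily equivalent to the diagonal algebra of a direct integral $\mathscr{H}=\int^{\oplus}_{\Lambda}\mathscr{H}(\lambda)\,d\mu(\lambda)$, and conjugating by that unitary (which again preserves boundedness and maximality of $\mathscr{P}$) and applying the first basic fact gives $A=\int^{\oplus}_{\Lambda}A(\lambda)\,d\mu(\lambda)$.

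It remains to show that $A(\lambda)$ is strongly irreducible for a.e.\ $\lambda$, and this is the heart of the matter. If it fails, then $E=\{\lambda:\{A(\lambda)\}'\text{ has an idempotent other than }0\text{ and }I^{}_{\mathscr{H}(\lambda)}\}$ has positive measure, so $E^{}_{m}=\{\lambda\in E:\,\{A(\lambda)\}'\text{ has such an idempotent of norm }\le m\}$ has positive measure for some $m$. I would then invoke a measurable selection theorem (von Neumann's, or Kuratowski--Ryll-Nardzewski): the set of pairs $(\lambda,R)$ with $\lambda\in E^{}_{m}$ and $R$ an idempotent of norm $\le m$ satisfying $RA(\lambda)=A(\lambda)R$ and $R\notin\{0,I^{}_{\mathscr{H}(\lambda)}\}$ is, up to a null set, Borel in the natural Borel structure associated with the direct integral, and a measurable section $\lambda\mapsto R(\lambda)$, extended by $R(\lambda)=0$ off $E^{}_{m}$, assembles to a bounded decomposable idempotent $R=\int^{\oplus}_{\Lambda}R(\lambda)\,d\mu(\lambda)\in\{A\}'$. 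Then $R$ commutes with the diagonal algebra $\mathscr{M}$, hence with $\mathscr{P}$, yet $R\notin\mathscr{M}$ since its fibres are non-scalar on the positive-measure set $E^{}_{m}$; in particular $R\notin\mathscr{P}$, so $\mathscr{P}\cup\{R\}$ is a strictly larger bounded abelian set of idempotents in $\{A\}'$, contradicting maximality. Hence $A(\lambda)$ is strongly irreducible a.e., and $A$, being similar to $\int^{\oplus}_{\Lambda}A(\lambda)\,d\mu(\lambda)$, is similar to a direct integral of strongly irreducible operators. I expect the main obstacle to be precisely this last step --- producing a \emph{measurable} and \emph{uniformly bounded} family of nontrivial idempotents in the fibre commutants, the truncation by $m$ being what keeps the assembled operator bounded --- with the unitarization of a bounded Boolean algebra of idempotents the secondary place where boundedness and separability are genuinely used.
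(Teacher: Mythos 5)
The paper does not actually prove this statement: it is quoted verbatim as Theorem 1.2 of \cite{Shi_1}, so there is no in-paper argument to compare yours against. Judged on its own terms, your outline is the natural one and is consistent with the machinery the paper does record (the two ``basic results'' from \cite{Schwartz} and the Borel-measurability tools of \cite{Azoff_1,Azoff_2}, which is where the cited proof lives). The forward implication is correct as written, as is the reduction of the converse to showing the fibres are a.e.\ strongly irreducible; and you correctly identify the measurable-selection step, with the truncation by the norm bound $m$ to keep the assembled idempotent bounded, as the analytic heart --- von Neumann selection applied to the analytic set of pairs $(\lambda,R)$, carried out over each $\Lambda^{}_{n}$ separately since the fibre dimension is not constant, is indeed how the contradicting idempotent is built, and $E^{}_{m}$ is then only analytic, hence $\mu$-measurable, which suffices.

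The one place where your argument, as written, is not yet a proof is the unitarization step: ``the general (separable) case follows by integrating this device over the Boolean algebra'' is a gesture, since an infinite Boolean algebra carries no distinguished measure to integrate against. The correct implementation of your idea is to pass to the set $G=\{\pm(2P-I):P\in\mathscr{P}\}$, which is a group because $(2P-I)(2Q-I)=-\bigl(2(P+Q-2PQ)-I\bigr)$ and $P+Q-2PQ$ lies in $\mathscr{P}$ by your Boolean-algebra step; $G$ is abelian and uniformly bounded by $2\sup^{}_{P\in\mathscr{P}}\|P\|+1$, so Dixmier's averaging over an invariant mean on the amenable group $G$ produces a positive invertible $S$ with $g^{*}Sg=S$ for all $g\in G$, and $X=S^{1/2}$ conjugates each involution $2P-I$ to a unitary involution, hence each $P$ to an orthogonal projection. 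Your finite-atom formula $X=\bigl(\sum^{}_{i}E^{*}_{i}E^{}_{i}\bigr)^{1/2}$ is exactly the finite special case of this average. With that replacement the proposal becomes a complete proof of the theorem.
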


That {\textit{an operator $A$ is similar to a direct integral of
strongly irreducible operators}} denoted by
$XAX^{-1}=\int^{}_{\Lambda}B(\lambda)d\mu(\lambda)$ for some
invertible operator $X$ in $\mathscr {L}(\mathscr {H})$ means that
the Hilbert space $\mathscr {H}=\int^{\oplus}_{\Lambda} \mathscr
{H}^{}_{}(\lambda)d\mu(\lambda)$ is in the sense of $(1.3)$ and the
operator $XAX^{-1}$ is decomposable with respect to the
corresponding diagonal algebra such that the integrand $B(\cdot)$ is
a bounded strongly $\mu$-measurable operator-valued function defined
on $\Lambda$ and $B(\lambda)$ is strongly irreducible on the
corresponding fibre space $\mathscr{H}^{}_{}(\lambda)$ for almost
every $\lambda$ in $\Lambda$. For related concepts and results about
von Neumann's reduction theory, the reader is referred to
\cite{Azoff_2,Conway_1,Davidson,Rosenthal,Schwartz}.

Since then, we have paid more attention to the subset $\mathscr{S}$
of $\mathscr {L}(\mathscr {H})$, where the set $\mathscr{S}$
consists of the operators $A$ in $\mathscr {L}(\mathscr {H})$ such
that every $\{A\}^{\prime}$ contains a bounded maximal abelian set
of idempotents. We also found that for an operator $A$ in $\mathscr
{L}(\mathscr {H})$, the commutant $\{A\}^{\prime}$ may both contain
a bounded maximal abelian set of idempotents and an unbounded
maximal abelian set of idempotents.

Inspired by $(1.1)$, our second question is whether the equality
$(1.1)$ holds in the commutant $\{A\}^{\prime}$ for each operator
$A$ in $\mathscr{S}$. In \cite{Shi_2}, we gave a negative answer by
constructing an operator $C$ in the form
$$C=\begin{pmatrix}
N^{(\infty)}_{\mu}&I\\
0&N^{(\infty)}_{\mu}
\end{pmatrix}
\begin{matrix}
(L^2(\mu))^{(\infty)}\\
(L^2(\mu))^{(\infty)}\\
\end{matrix}. \eqno{(1.8)}$$

We denote by $\mathscr {S}^{}_{U}$ the subset of $\mathscr{S}$ such
that for every operator $A$ in $\mathscr {S}^{}_{U}$, the equality
$(1.1)$ holds for any two bounded maximal abelian sets of
idempotents in the commutant $\{A\}^{\prime}$. Compared with the
Jordan canonical form theorem, we define and say that {\textit{the
strongly irreducible decomposition of every operator $A$ in
$\mathscr {S}^{}_{U}$ is unique up to similarity}}. Therefore, our
third question is what the structure of an operator $A$ in $\mathscr
{S}^{}_{U}$ is. In \cite{Shi_2}, inspired by (\cite{Azoff_1},
Corollary $2$), the author mainly proved that an $n$-normal operator
$A$ in $\mathscr {L}(\mathscr {H})$ unitarily equivalent to the
following form is in $\mathscr {S}^{}_{U}$:
$$A=\begin{pmatrix}
N^{}_{\mu}&M^{}_{f^{}_{12}}&\cdots&M^{}_{f^{}_{1n}}\\
0&N^{}_{\mu}&\cdots&M^{}_{f^{}_{2n}}\\
\vdots&\vdots&\ddots&\vdots\\
0&0&\cdots&N^{}_{\mu}\\
\end{pmatrix}^{(m)}_{n\times n}, \eqno{(1.9)}$$
where $m,n<\infty$, $\mu$ and $N^{}_{\mu}$ are as in $(1.7)$,
$f^{}_{ij}$ is in $L^{\infty}(\mu)$ for $1\leq i,j\leq n$ and the
inequality $f^{}_{i,i+1}(\lambda)\neq 0$ holds for almost every
$\lambda$ in the support of $\mu$ and $1\leq i\leq n-1$.

In the present paper, we have two motivations. One is to generalize
the main result of \cite{Shi_2}. Precisely, we prove the operator
$$A=\bigoplus^{\infty}_{n=1}\bigoplus^{\infty}_{m=1} A^{(m)}_{nm} \eqno{(1.10)}$$ in
$\mathscr {L}(\mathscr {H})$ is in $\mathscr {S}^{}_{U}$, where
$A^{}_{nm}=0$ holds for all but finitely many $m$ and $n$ in
$\mathbb {N}$, $A_{nm}$ is unitarily equivalent to the form
$$\left(\begin{array}{cccccc}
N^{}_{\nu^{}_{nm}}&M^{}_{f^{}_{nm;12}}&\cdots&M^{}_{f^{}_{nm;1n}}\\
0&N^{}_{\nu^{}_{nm}}&\cdots&M^{}_{f^{}_{nm;2n}}\\
\vdots&\vdots&\ddots&\vdots\\
0&0&\cdots&N^{}_{\nu^{}_{nm}}\\
\end{array}\right)_{n\times n}, \eqno(1.11)$$
the measures $\nu^{}_{nm^{}_{1}}$ and $\nu^{}_{nm^{}_{2}}$ are
mutually singular compactly supported finite positive regular Borel
for $m^{}_{1}\neq m^{}_{2}$, the function $f^{}_{nm;ij}$ is in
$L^{\infty}(\nu^{}_{nm})$ for $1\leq i,j\leq n$ such that the
inequality
$$f^{}_{nm;i,i+1}(\lambda)\neq 0 \eqno{(1.12)}$$ holds for almost
every $\lambda$ in the support of $\nu^{}_{nm}$ for $1\leq i\leq
n-1$.

Since $\{A^{}_{nm^{}_{1}}\oplus A^{}_{nm^{}_{2}}\}^{\prime}=
\{A^{}_{nm^{}_{1}}\}^{\prime}\oplus\{A^{}_{nm^{}_{2}}\}^{\prime}$,
for the sake of simplicity and without loss of generality, the above
object can be fulfilled by proving that
$$A=A^{(m^{}_{1})}_{n^{}_{1}}\oplus A^{(m^{}_{2})}_{n^{}_{2}}\oplus
A^{(m^{}_{3})}_{n^{}_{3}} \eqno{(1.13)}$$ in $\mathscr {L}(\mathscr
{H})$ is in $\mathscr {S}^{}_{U}$, where $A^{}_{n^{}_{k}}$ is in the
form
$$A^{}_{n^{}_{k}}=\left(\begin{array}{cccccc}
N^{}_{\mu^{}_{}}&M^{}_{f^{}_{12,{k}}}&\cdots&M^{}_{f^{}_{1n^{}_{k},{k}}}\\
0&N^{}_{\mu^{}_{}}&\cdots&M^{}_{f^{}_{2n^{}_{k},{k}}}\\
\vdots&\vdots&\ddots&\vdots\\
0&0&\cdots&N^{}_{\mu^{}_{}}\\
\end{array}\right)_{n^{}_{k}\times n^{}_{k}}, \eqno(1.14)$$
for $k=1,2,3$, $n^{}_{1}> n^{}_{2}> n^{}_{3}$, the measure $\mu$ is
as in $(1.7)$, the function $f^{}_{ij,k}$ is in
$L^{\infty}(\mu^{}_{})$ for $1\leq i,j\leq n^{}_{k}$ such that the
inequality
$$f^{}_{i,i+1,k}(\lambda)\neq 0 \eqno{(1.15)}$$ holds for almost
every $\lambda$ in the support of $\mu$ for $1\leq i\leq
n^{}_{k}-1$, $1\leq k\leq 3$. The condition $(1.12)$ (or $(1.15)$)
is necessary and sufficient for an operator in the form of $(1.11)$
(or $(1.14)$) to be strongly irreducible almost everywhere on the
support of the corresponding measure in the sense of direct integral
decomposition as in $(1.5)$, which was proved in (\cite{Shi_2},
Lemma $3.1$).

The other motivation is to prove a complete similarity invariant of
an operator $A$ as in $(1.13)$ by $K$-theory for Banach algebras.
This similarity invariant is different from the necessary and
sufficient conditions for two $n$-normal operators similar to each
other proved by D. Deckard and C. Pearcy in (\cite{Deckard_1},
Theorem $1$).

Precisely, we prove the following theorems in this paper. In
$K$-theory for Banach algebras, by $V(\{A\}^{\prime}_{})$ we denote
the semigroup
$\cup^{\infty}_{n=1}\mathscr{P}^{}_{n}(\{A\}^{\prime})/\sim$, where
$\mathscr{P}^{}_{n}(\{A\}^{\prime})$ is the set of idempotents in
$M^{}_{n}(\{A\}^{\prime})$ and by ``$\sim$'' we denote that
similarity relation in the corresponding algebra. By
$K^{}_{0}(\{A\}^{\prime}_{})$ we denote the Grothendieck group
generated by $V(\{A\}^{\prime}_{})$, which is well known as the
$K^{}_{0}$-group of $\{A\}^{\prime}_{}$.

\begin{theorem}
Let $A\in\mathscr {L}(\mathscr {H})$ be assumed as in $(1.13)$. Then
the following statements hold:
\begin{enumerate}
\item [(a)] The strongly irreducible decomposition of $A$ is
unique up to similarity;
\item [(b)] $K^{}_{0}(\{A\}^{\prime})\cong\{f(\lambda)\in\mathbb{Z}^{(3)}:f
{\mbox{ is bounded and Borel on }}\sigma(A)\}.$
\end{enumerate}
\end{theorem}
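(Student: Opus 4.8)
The plan is to handle the two parts in sequence, with part (a) supplying the structural backbone that makes the $K$-theory computation in part (b) tractable. First I would reduce to the three-summand model $(1.13)$ exactly as the excerpt indicates, since $\{A_{n_1}^{(m_1)}\oplus A_{n_2}^{(m_2)}\oplus A_{n_3}^{(m_3)}\}'$ splits as a block $3\times 3$ array of intertwiner spaces $\mathrm{Hom}(A_{n_j}^{(m_j)},A_{n_k}^{(m_k)})$. The key computation is to determine these intertwiner spaces. Because each $A_{n_k}$ is an upper-triangular $n_k\times n_k$ operator matrix over $L^\infty(\mu)$ with the diagonal $N_\mu$ repeated and the superdiagonal multipliers $f_{i,i+1,k}$ nonvanishing a.e. (condition $(1.15)$), a Fuglede-type argument shows any operator intertwining two such blocks must itself commute with the diagonal algebra of $\mu$, hence be decomposable: it is $\int^\oplus_{\sigma(N_\mu)} T(\lambda)\,d\mu(\lambda)$ with $T(\lambda)$ a fibrewise intertwiner of two finite nilpotent-perturbed scalar Jordan-type matrices. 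A direct fibrewise linear-algebra calculation (the classical description of intertwiners of single Jordan blocks at the same eigenvalue, as in Chapter 2 of \cite{Jiang_5}) then identifies $T(\lambda)$, and the multiplicity bookkeeping from the $(m_k)$ exponents identifies the full $3\times 3$ array of intertwiner bimodules. The main obstacle here is controlling measurability and boundedness of the fibrewise intertwiners so that the pointwise description integrates up to an honest description of $\{A\}'$; this is where the regularity of $\mu$ and the essential boundedness of the $f_{ij,k}$ enter, together with the measurable-selection arguments from von Neumann's reduction theory.

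Once $\{A\}'$ is described, part (a) follows by showing that any two bounded maximal abelian sets of idempotents $\mathscr{Q},\mathscr{P}$ in $\{A\}'$ are conjugate by an invertible element of $\{A\}'$. The strategy is to argue that a maximal abelian set of idempotents in $\{A\}'$, after conjugating by a suitable invertible in $\{A\}'$, must respect the decomposition into the three strongly irreducible direct-integral summands: on each summand $A_{n_k}^{(m_k)}$ the restriction of a maximal abelian set of idempotents is forced (by the fibrewise structure above, where each fibre block has a connected-up-to-similarity lattice of idempotents) to be conjugate to the ``diagonal'' family coming from the $m_k$-fold multiplicity and the Borel partition of $\sigma(N_\mu)$. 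Patching the three conjugating invertibles gives $X\in\{A\}'$ with $X\mathscr{Q}X^{-1}=\mathscr{P}$, which is precisely the statement that the strongly irreducible decomposition of $A$ is unique up to similarity, i.e. $A\in\mathscr{S}_U$ and (a) holds. The delicate point is the measurable patching across the fibres: one needs the conjugating operator to be a bounded decomposable operator with bounded inverse, which again rests on a uniform bound for the fibrewise conjugations, obtained from the explicit normal form of the fibre intertwiners.

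For part (b), I would compute $K_0(\{A\}')$ from the now-explicit description of $\{A\}'$ as (Morita-equivalent to) an algebra of bounded Borel functions on $\sigma(A)=\mathrm{supp}(\mu)$ valued in a fixed finite-dimensional block upper-triangular matrix algebra whose semisimple quotient is $\mathbb{C}^{\oplus 3}$ (one factor per distinct block size $n_1>n_2>n_3$, the off-diagonal intertwiner bimodules being nilpotent in the relevant sense and hence invisible to $K_0$). Idempotents in $M_N(\{A\}')$, up to similarity, are classified by their fibrewise ranks in each of the three simple summands, which vary boundedly and Borel-measurably in $\lambda$; standard arguments (continuity/locality of $K_0$ under the relevant approximation, and the fact that similarity of bounded Borel idempotent-valued functions is detected fibrewise by rank) identify $V(\{A\}')$ with the semigroup of bounded Borel functions $\sigma(A)\to\mathbb{N}^{(3)}$, whence the Grothendieck group is the group of bounded Borel functions $\sigma(A)\to\mathbb{Z}^{(3)}$, as claimed. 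The expected main obstacle across the whole argument is the first one: proving that intertwiners between the blocks are automatically decomposable and have the claimed fibrewise form with uniform norm control — everything downstream (both the uniqueness in (a) and the $K_0$-computation in (b)) is then a matter of assembling fibrewise linear algebra into a reduction-theory statement.
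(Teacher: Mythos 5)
Your proposal follows essentially the same route as the paper: describe the intertwiner blocks of $\{A\}'$ fibrewise over $\sigma(N_\mu)$ (the paper's Lemma 2.1), conjugate any bounded maximal abelian set of idempotents onto the standard one by exploiting the radical/nilpotent part of the commutant together with a bounded Borel selection of fibrewise similarities in the sense of Azoff (Lemmas 2.2 and 2.3), and compute $K_0$ through the split quotient of $\{A\}'$ onto $\bigoplus_{i=1}^{3} M_{m_i}(L^\infty(\mu))$. The one step you gloss over is the extraction, from an arbitrary bounded maximal abelian set of idempotents, of a finite skeleton of pairwise orthogonal idempotents of fibrewise normalized rank one (the paper's Lemma 2.3, proved by an exhaustion argument over Borel subsets of $\sigma(N_\mu)$); this is a genuine intermediate lemma you would need to supply, but it sits inside your stated strategy rather than pointing to a different or failing approach.
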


When we deal with a finite direct sum of operators as in $(1.10)$,
inspired by (\cite{Conway_1}, Chapter $9$, Theorem $10.16$) we
obtain a generalization of the above theorem in the following form.

\begin{theorem}
Let $A\in\mathscr {L}(\mathscr {H})$ be assumed as in $(1.10)$. Then
the following statements hold:
\begin{enumerate}
\item [(a)] The strongly irreducible decomposition of $A$ is
unique up to similarity;
\item [(b)] There exists a bounded $\mathbb{N}$-valued simple
function $r^{}_{A}$ on $\sigma(A)$ such that
$$K^{}_{0}(\{A\}^{\prime})\cong
\{f(\lambda)\in\mathbb{Z}^{(r^{}_{A}(\lambda))}:f {\mbox{ is bounded
and Borel on }}\sigma(A)\}.$$
\end{enumerate}
\end{theorem}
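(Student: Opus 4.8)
The plan is to deduce Theorem~1.3 from Theorem~1.2 (the case $(1.13)$): first decompose $\sigma(A)$ according to which of the blocks $A_{nm}$ are ``active'' there, apply Theorem~1.2 on each piece, and then reassemble the $K$-theory. Since only finitely many $A_{nm}$ are nonzero, put $\mu=\sum_{n,m}\nu_{nm}$, a finite positive regular Borel measure with compact support, write $h_{nm}=d\nu_{nm}/d\mu$ and $E_{nm}=\{\lambda:h_{nm}(\lambda)>0\}$, so that each $L^{2}(\nu_{nm})$ is canonically unitarily equivalent to $L^{2}(\mu|_{E_{nm}})$ and the mutual singularity of $\nu_{nm_{1}}$ and $\nu_{nm_{2}}$ for $m_{1}\neq m_{2}$ says exactly that $E_{nm_{1}}\cap E_{nm_{2}}$ is $\mu$-null. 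For each subset $T$ of the finite index set $\{(n,m):A_{nm}\neq 0\}$ set $F_{T}=\{\lambda:h_{nm}(\lambda)>0\iff(n,m)\in T\}$. The family $\{F_{T}\}$ is a Borel partition of $\mathbb{C}$ which, up to a $\mu$-null set, partitions $\sigma(A)$, and $F_{T}$ is $\mu$-null whenever $T$ contains two indices with the same first coordinate. Regrouping the coordinates of $\mathscr{H}=\bigoplus_{n,m}(L^{2}(\nu_{nm}))^{(nm)}$ according to which $F_{T}$ a point belongs to -- legitimate because each $A_{nm}$, being built from multiplication operators, reduces the corresponding diagonal projections -- yields a unitary equivalence $A\cong\bigoplus_{T}A_{T}$ in which, after the null $F_{T}$ are discarded, each $A_{T}:=\bigoplus_{(n,m)\in T}\bigl(A_{nm}|_{F_{T}}\bigr)^{(m)}$ is an operator of the exact form $(1.13)$--$(1.14)$: its summands are built over the \emph{common} measure $\mu|_{F_{T}}$, have pairwise distinct sizes, and inherit $(1.15)$ from $(1.12)$. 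The measures $\{\mu|_{F_{T}}\}$ are moreover pairwise mutually singular.

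The next step is to establish the identity $\{A\}'=\bigoplus_{T}\{A_{T}\}'$. This is the same phenomenon as the identity $\{A_{nm_{1}}\oplus A_{nm_{2}}\}'=\{A_{nm_{1}}\}'\oplus\{A_{nm_{2}}\}'$ recalled in the Introduction, now applied to the finitely many pieces $A_{T}$: the argument uses only the mutual singularity of the scalar spectral measures of the pieces, not that the pieces have equal size. Concretely, for $T\neq T'$ I would need to show that any bounded $Y$ with $YA_{T}=A_{T'}Y$ vanishes; writing $A_{T}=Z_{T}+\mathscr{N}_{T}$, with $Z_{T}=\int^{\oplus}_{F_{T}}\lambda I\,d\mu(\lambda)$ the central normal summand and $\mathscr{N}_{T}$ the uniformly nilpotent upper triangular part commuting with it, this reduces to a Fuglede--Putnam/Rosenblum-type statement about the normal parts, using that $\mu|_{F_{T}}$ and $\mu|_{F_{T'}}$ are carried by disjoint Borel sets; I would carry it out as in the reduction-theory arguments of \cite{Shi_2}. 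I expect this to be the main obstacle: it is where the combinatorial decomposition above must be fused with the analytic input of \cite{Shi_2}, and where one must treat pieces whose spectra can overlap even though their scalar spectral measures remain singular.

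Granting the splitting, Theorem~1.2 together with its proof -- which does not use the particular number $3$ of summands and so applies to any finite number of summands of the form $(1.14)$, the one-summand case being the main result of \cite{Shi_2} -- gives, for each surviving $T$, that $A_{T}\in\mathscr{S}_{U}$ and $K_{0}(\{A_{T}\}')\cong\{g(\lambda)\in\mathbb{Z}^{(|T|)}:g\text{ is bounded and Borel on }\sigma(A_{T})\}$. Part~(a) then follows because $\mathscr{S}_{U}$ is closed under finite direct sums with splitting commutant: a bounded maximal abelian set of idempotents in $\bigoplus_{T}\{A_{T}\}'$ is necessarily the product over $T$ of bounded maximal abelian sets of idempotents in the summands $\{A_{T}\}'$, so any two such sets are conjugated componentwise by invertible elements of the $\{A_{T}\}'$, hence globally by the (finite) direct sum of those invertibles, which lies in $\{A\}'$. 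For part~(b), $K_{0}$ is additive over the finite direct sum $\{A\}'=\bigoplus_{T}\{A_{T}\}'$, so $K_{0}(\{A\}')\cong\bigoplus_{T}K_{0}(\{A_{T}\}')$; letting $r_{A}$ be the bounded $\mathbb{N}$-valued simple function on $\sigma(A)$ equal to $|T|$ on $F_{T}$, and using that $\{F_{T}\}$ partitions $\sigma(A)$ up to a null set and that over $F_{T}$ the fibre contributes exactly $|T|=r_{A}(\lambda)$ copies of $\mathbb{Z}$, the right-hand side is canonically identified with $\{f(\lambda)\in\mathbb{Z}^{(r_{A}(\lambda))}:f\text{ is bounded and Borel on }\sigma(A)\}$; a routine check shows that this identification is independent of the auxiliary choice of $\mu$.
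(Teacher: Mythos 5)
Your proposal is correct and follows essentially the same route as the paper, which likewise reduces Theorem 1.3 to the three-summand case by decomposing the underlying normal part into finitely many pieces with pairwise mutually singular scalar-valued spectral measures (the paper invokes Conway, IX, Theorem 10.16 where you build the partition $\{F_{T}\}$ by hand from Radon--Nikodym derivatives) and then applying the proof of Theorem 1.2 on each piece. Your write-up is in fact more explicit than the paper's one-paragraph reduction, in particular about the splitting $\{A\}'=\bigoplus_{T}\{A_{T}\}'$, whose intertwiner-vanishing step is exactly the argument already carried out entrywise in the proof of Lemma 2.1.
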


For operators as in $(1.13)$, we characterize the similarity with
$K$-theory for Banach algebras as follows.

\begin{theorem}
Let $A=A^{(m^{}_{1})}_{n^{}_{1}}\oplus
A^{(m^{}_{2})}_{n^{}_{2}}\oplus A^{(m^{}_{3})}_{n^{}_{3}}$ and
$B=B^{(k^{}_{1})}_{l^{}_{1}}\oplus B^{(k^{}_{2})}_{l^{}_{2}}\oplus
B^{(k^{}_{3})}_{l^{}_{3}}$ be as in $(1.13)$ and every entry of $A$
and $B$ is in $L^{\infty}(\mu)$ as in $(1.14)$. Then $A$ and $B$ are
similar if and only if there exists a group isomorphism $\theta$
such that the following statements hold:
\begin{enumerate}
\item
$\theta(K^{}_{0}(\{A\oplus B\}^{\prime}))=
\{f(\lambda)\in\mathbb{Z}^{(3)}:f\ {\mbox{is\ bounded and Borel on
}}\sigma(A)\}$;
\item
$\theta([I^{}_{\{A\oplus B\}^{\prime}}])=
2m^{}_{1}e^{}_{1}+2m^{}_{2}e^{}_{2}+2m^{}_{3}e^{}_{3}$,
\end{enumerate}
where $\{e^{}_{i}(\lambda)\}^{3}_{i=1}$ are the generators of the
semigroup $\mathbb {N}^{(3)}$ of $\mathbb {Z}^{(3)}$ for every
$\lambda$ in $\sigma(A)$ and $I^{}_{\{A\oplus B\}^{\prime}}$ is the
unit of $\{A\oplus B\}^{\prime}$.
\end{theorem}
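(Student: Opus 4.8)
The plan is to reduce everything to the single operator $A\oplus B$ and compare it with $A^{(2)}=A\oplus A$, which is again of the form $(1.13)$ — with Jordan sizes $n^{}_{1}>n^{}_{2}>n^{}_{3}$, multiplicities $2m^{}_{1},2m^{}_{2},2m^{}_{3}$, the same measure $\mu$, and spectrum $\sigma(A^{(2)})=\sigma(A)$ — so that Theorem 1.2 applies to it. For the forward implication, if $A$ is similar to $B$ then $A\oplus B$ is similar to $A^{(2)}$, hence $\{A\oplus B\}^{\prime}$ and $\{A^{(2)}\}^{\prime}$ are isomorphic as Banach algebras and the induced isomorphism on $K^{}_{0}$ preserves the class of the identity. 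Composing it with the isomorphism furnished by Theorem 1.2(b) for $A^{(2)}$ — which, exactly as the standard matrix traces do in $(1.2)$, carries $[I^{}_{\{A^{(2)}\}^{\prime}}]$ to $2m^{}_{1}e^{}_{1}+2m^{}_{2}e^{}_{2}+2m^{}_{3}e^{}_{3}$ — produces the desired group isomorphism $\theta$, and (1)--(2) are then immediate.

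Conversely, assume $\theta$ as in (1)--(2). Note that $A\oplus B=\bigoplus_{i}A^{(m^{}_{i})}_{n^{}_{i}}\oplus\bigoplus_{j}B^{(k^{}_{j})}_{l^{}_{j}}$ is a finite direct sum of blocks of the shape $(1.14)$ over the one measure $\mu$. The $K^{}_{0}$-computation behind Theorem 1.3, adapted to this situation by the multiplicity theory for direct sums of $n$-normal operators over a common measure (cf.\ \cite{Conway_1}, Chapter $9$, Theorem $10.16$), yields $K^{}_{0}(\{A\oplus B\}^{\prime})\cong\{f(\lambda)\in\mathbb{Z}^{(r)}:f\text{ is bounded and Borel on }\sigma(A)\}$, where $r$ is the number of distinct sizes among $n^{}_{1},n^{}_{2},n^{}_{3},l^{}_{1},l^{}_{2},l^{}_{3}$ (a constant, since all the measures coincide with $\mu$), and where, under the canonical identification, the class of the identity is the function recording, size by size, the total multiplicity. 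Matching this description against (1) forces $r=3$, and since both triples are strictly decreasing this gives $\{n^{}_{1},n^{}_{2},n^{}_{3}\}=\{l^{}_{1},l^{}_{2},l^{}_{3}\}$, i.e.\ $n^{}_{i}=l^{}_{i}$ for $i=1,2,3$; matching against (2) then forces $m^{}_{i}+k^{}_{i}=2m^{}_{i}$, i.e.\ $k^{}_{i}=m^{}_{i}$, for $i=1,2,3$.

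Now let $p^{}_{A},p^{}_{B}\in\{A\oplus B\}^{\prime}$ be the coordinate projections of $\mathscr{H}^{}_{A\oplus B}=\mathscr{H}^{}_{A}\oplus\mathscr{H}^{}_{B}$ onto its two summands; then $p^{}_{A}+p^{}_{B}=I$ and, by the identification above, $[p^{}_{A}]$ and $[p^{}_{B}]$ are the constant functions $(m^{}_{1},m^{}_{2},m^{}_{3})$ and $(k^{}_{1},k^{}_{2},k^{}_{3})=(m^{}_{1},m^{}_{2},m^{}_{3})$, so $[p^{}_{A}]=[p^{}_{B}]$ in $K^{}_{0}(\{A\oplus B\}^{\prime})$. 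If this equality can be upgraded to a similarity $Xp^{}_{A}X^{-1}=p^{}_{B}$ with $X$ invertible in $\{A\oplus B\}^{\prime}$, then $X$ carries $\mathscr{H}^{}_{A}=\operatorname{ran}p^{}_{A}$ bijectively and boundedly-both-ways onto $\mathscr{H}^{}_{B}=\operatorname{ran}p^{}_{B}$ and, lying in $\{A\oplus B\}^{\prime}$, restricts there to a similarity between $A$ and $B$; this finishes the proof. The heart of the matter — and the step I expect to be the main obstacle — is exactly this promotion of a $K^{}_{0}$-equality of complementary idempotents to a similarity of idempotents, i.e.\ cancellation in $\{A\oplus B\}^{\prime}$. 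I would establish it by showing that $\{A\oplus B\}^{\prime}$ modulo its Jacobson radical is, via the reduction-theory structure recalled in \S1 (and in \cite{Azoff_1,Schwartz}), a finite type I von Neumann algebra — a direct integral of finite-dimensional matrix algebras over $\mu$ — which therefore has cancellation, and that idempotents lift through the radical uniquely up to similarity; here the membership $A\oplus B\in\mathscr{S}^{}_{U}$ (valid because $A\oplus B$ is of the form considered around $(1.10)$) is what excludes obstructions coming from the non-self-adjoint part of the commutant.
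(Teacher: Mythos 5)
Your forward direction is fine and matches the paper's. The converse, however, rests on a computation that is false in general. You claim that $K^{}_{0}(\{A\oplus B\}^{\prime})\cong\{f(\lambda)\in\mathbb{Z}^{(r)}\}$ where $r$ is the number of \emph{distinct sizes} among $n^{}_{1},n^{}_{2},n^{}_{3},l^{}_{1},l^{}_{2},l^{}_{3}$. But Theorems $1.2$/$1.3$ compute $K^{}_{0}$ only for operators of the form $(1.10)$/$(1.13)$, i.e.\ where all blocks of a given size are copies of \emph{one} operator; $A\oplus B$ has that form precisely when $A^{}_{n^{}_{i}}$ is similar to $B^{}_{l^{}_{i}}$, which is the conclusion you are after. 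The paper's own Section $3$ exhibits the failure: for $X$ and $Y$ as in $(3.6)$ (both of the form $(1.14)$ with $n=2$, $m=1$, over the same $\mu$), your formula gives $r=1$, hence $K^{}_{0}(\{X\oplus Y\}^{\prime})\cong\{f:\sigma(N^{}_{\mu})\to\mathbb{Z}\}$ with $[I]\mapsto 2e$, and Lemma $2.4$ would then force $X\sim Y$ --- but the paper shows $X\not\sim Y$. So the size count cannot determine $K^{}_{0}(\{A\oplus B\}^{\prime})$, and your deduction of $n^{}_{i}=l^{}_{i}$, $k^{}_{i}=m^{}_{i}$ and of $[p^{}_{A}]=[p^{}_{B}]$ all lean on an identification you are not entitled to. The same example undermines your proposed cancellation argument: the quotient of $\{X\oplus Y\}^{\prime}$ by its Jacobson radical is \emph{not} a type I von Neumann algebra; the off-diagonal intertwiners have ``diagonal parts'' of the form $(M^{}_{a},M^{}_{a}N^{}_{\mu})$, so the quotient is a twisted matrix algebra over $L^{\infty}(\mu)$ that degenerates near $\lambda=0$, and whether cancellation holds in it is exactly equivalent to the similarity in question. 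Likewise, asserting $A\oplus B\in\mathscr{S}^{}_{U}$ ``because it is of the form considered around $(1.10)$'' presupposes the conclusion.

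The paper avoids this circularity by working with finer idempotents rather than the two coordinate projections. It chooses projections $E^{}_{i}$, $F^{}_{i}$ in $\{T\}^{\prime}$ with $T|^{}_{\mathrm{ran}E^{}_{i}}=A^{}_{n^{}_{i}}$, $T|^{}_{\mathrm{ran}F^{}_{i}}=B^{}_{l^{}_{i}}$, mutually orthogonal and with normalized fibrewise trace equal to $1$, and argues from hypothesis $(1)$ directly: if some $F^{}_{i}$ were \emph{not} similar to $E^{}_{i}$ in $\{T\}^{\prime}$, the classes $[E^{}_{1}],[E^{}_{2}],[E^{}_{3}],[F^{}_{i}]$ would generate $\mathbb{Z}^{(4)}$ over some $\lambda$, contradicting the assumed rank-three structure of $K^{}_{0}(\{T\}^{\prime})$. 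The similarity $E^{}_{i}\sim F^{}_{i}$ then restricts to a similarity $A^{}_{n^{}_{i}}\sim B^{}_{l^{}_{i}}$, and only afterwards does hypothesis $(2)$ enter, to give $m^{}_{i}+k^{}_{i}=2m^{}_{i}$. In short: the hypothesis on $K^{}_{0}$ must be \emph{used} to rule out the non-similar-blocks-of-equal-size scenario; it cannot be re-derived from the shape of $A\oplus B$ as you attempt. To repair your argument you would need to replace the ``multiplicity theory'' computation and the type I claim by an argument of this kind.
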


By a more complicated computation, we obtain a generalization of the
above theorem as follows.

\begin{theorem}
Let $A=\sum^{s}_{i=1}\oplus A^{(m^{}_{i})}_{n^{}_{i}}$ and
$B=\sum^{t}_{j=1}\oplus B^{(k^{}_{j})}_{l^{}_{j}}$ be in the sense
of $(1.13)$, and every entry of $A^{}_{n^{}_{i}}$ and
$B^{}_{l^{}_{j}}$ is in $L^{\infty}(\mu)$ as in $(1.14)$, for $1\leq
i\leq s<\infty$ and $1\leq j\leq t<\infty$ , where $n^{}_{i}\neq
n^{}_{j}$ for $i\neq j$, and $m^{}_{i}$, $n^{}_{i}$, $k^{}_{j}$ and
$l^{}_{j}$ are in $\mathbb {N}$ for every $i$ and $j$. Then $A$ and
$B$ are similar if and only if there exists a group isomorphism
$\theta$ such that the following statements hold:
\begin{enumerate}
\item
$\theta(K^{}_{0}(\{A\oplus B\}^{\prime}))=
\{f(\lambda)\in\mathbb{Z}^{(s)}:f\ {\mbox{is\ bounded and Borel on
}}\sigma(A)\}$;
\item
$\theta([I^{}_{\{A\oplus B\}^{\prime}}])=
2m^{}_{1}e^{}_{1}+2m^{}_{2}e^{}_{2}+\cdots+2m^{}_{s}e^{}_{s}$,
\end{enumerate}
where $\{e^{}_{i}(\lambda)\}^{s}_{i=1}$ are the generators of the
semigroup $\mathbb {N}^{(s)}$ of $\mathbb {Z}^{(s)}$ for every
$\lambda$ in $\sigma(A)$ and $I^{}_{\{A\oplus B\}^{\prime}}$ is the
unit of $\{A\oplus B\}^{\prime}$.
\end{theorem}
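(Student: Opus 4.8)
The plan is to reduce the statement to the structural results already established for operators of the forms $(1.10)$ and $(1.13)$, and then to recover the block data of $A$ and $B$ from the two $K$-theoretic conditions. The first move is a normal-form reduction. Using $(1.15)$, one rescales along each summand $A^{}_{n^{}_{i}}$ the successive copies of $L^{2}(\mu)$ by a diagonal similarity so that the first superdiagonal becomes $I$, and then, by a unipotent similarity $\hat S=I+\sum^{}_{p<q-1}M^{}_{h^{}_{pq}}E^{}_{pq}$ (using that every $M^{}_{h}$ commutes with $N^{}_{\mu}$, so the commutators that arise vanish), one clears all entries strictly above the first superdiagonal. Hence, up to similarity, $A=\bigoplus^{s}_{i=1}J^{}_{n^{}_{i}}(N^{}_{\mu})^{(m^{}_{i})}$ and $B=\bigoplus^{t}_{j=1}J^{}_{l^{}_{j}}(N^{}_{\mu})^{(k^{}_{j})}$, where $J^{}_{k}(N^{}_{\mu})$ is the $k\times k$ Jordan $k$-normal operator with $N^{}_{\mu}$ on the diagonal and $I$ on the first superdiagonal; in particular $\sigma(A)=\sigma(B)=\sigma(A\oplus B)=\mathrm{supp}\,\mu$.

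For the forward implication, if $A$ is similar to $B$ then $A\oplus B$ is similar to $A\oplus A=\bigoplus^{s}_{i=1}J^{}_{n^{}_{i}}(N^{}_{\mu})^{(2m^{}_{i})}$, which is again of the form $(1.10)$. Applying Theorem~1.3 to $A\oplus A$ gives $K^{}_{0}(\{A\oplus A\}^{\prime})\cong\{f\in\mathbb{Z}^{(s)}:f\text{ bounded Borel on }\sigma(A)\}$: fibrewise the commutant of $\bigoplus^{}_{i}J^{}_{n^{}_{i}}(\lambda)^{(2m^{}_{i})}$ has semisimple quotient $\bigoplus^{}_{i}M^{}_{2m^{}_{i}}(\mathbb{C})$, the cross terms $\mathrm{Hom}(J^{}_{n^{}_{i}}(\lambda),J^{}_{n^{}_{j}}(\lambda))$ for $i\neq j$ being nilpotent since $n^{}_{i}\neq n^{}_{j}$, so $K^{}_{0}$ of the fibre is $\mathbb{Z}^{(s)}$ and the class of the identity is the constant $2m^{}_{1}e^{}_{1}+\cdots+2m^{}_{s}e^{}_{s}$. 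Composing with the isomorphism $K^{}_{0}(\{A\oplus B\}^{\prime})\cong K^{}_{0}(\{A\oplus A\}^{\prime})$ induced by the similarity yields a group isomorphism $\theta$ satisfying $(1)$ and $(2)$.

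For the converse, apply Theorem~1.3 directly to $A\oplus B$: this identifies $K^{}_{0}(\{A\oplus B\}^{\prime})$, with its positive cone $V(\{A\oplus B\}^{\prime})$ and the position of its unit, with the bounded Borel functions on $\sigma(A)$ valued in $\mathbb{Z}^{(d)}$, where $d$ is the number of distinct integers among $n^{}_{1},\dots,n^{}_{s},l^{}_{1},\dots,l^{}_{t}$ and the unit is the constant function whose coordinates are the total multiplicities of the distinct block sizes in $A\oplus B$. One then has to show that $(1)$ and $(2)$ force $d=s$, $\{l^{}_{1},\dots,l^{}_{t}\}=\{n^{}_{1},\dots,n^{}_{s}\}$ (so $t=s$) and, after relabelling, $k^{}_{j}=m^{}_{j}$ for all $j$; concretely, one compares fibre by fibre the scaled semigroup carrying the ``total multiplicity'' unit with $\bigl(\mathbb{N}^{(s)},\,(2m^{}_{1},\dots,2m^{}_{s})\bigr)$ and argues that the generators and the coordinates of the unit — which by $(2)$ are precisely twice those of the unit of $K^{}_{0}(\{A\}^{\prime})$ — are intrinsic. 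Once $B$ is known to have the same block-size/multiplicity profile as $A$, the normal form exhibits $A$ and $B$ as similar.

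I expect this last extraction step to be the main obstacle: since $\theta$ is only assumed to be a group isomorphism, the needed rigidity must come from its compatibility with $V$ and with the order unit, and one must for instance exclude the abstract isomorphism $\{f:\sigma(A)\to\mathbb{Z}^{(d)}\}\cong\{f:\sigma(A)\to\mathbb{Z}^{(s)}\}$ produced by a Borel splitting of $\sigma(A)$ by checking that it cannot match the two distinguished units. This is where one uses that $\mu$ is non-atomic and standard, so that the local structure at each $\lambda$ is uniform and the number of simple fibre-summands and their multiplicities are detectable. The other ingredients — that $\{A\oplus B\}^{\prime}$ is a decomposable algebra whose $K^{}_{0}$ is computed fibrewise, and the blockwise assembly of the final similarity — are of the same nature as, and follow by the same reduction-theory methods as, the corresponding parts of Theorems~1.1--1.4.
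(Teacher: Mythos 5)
Your opening normal-form reduction is the fatal step. To make the first superdiagonal equal to $I$ by a diagonal similarity you must invert the operators $M^{}_{f^{}_{i,i+1}}$, and the hypothesis $(1.15)$ only gives $f^{}_{i,i+1}(\lambda)\neq 0$ almost everywhere, not invertibility of $M^{}_{f^{}_{i,i+1}}$ (the paper flags exactly this: ``the multiplication operators $M^{}_{f^{}_{i,i+1,k}}$ may not be invertible in general''). The claim that every block is similar to $J^{}_{n^{}_{i}}(N^{}_{\mu})$ is in fact false: the paper's own example $(3.6)$--$(3.7)$ exhibits
$Y=\left(\begin{smallmatrix}N^{}_{\mu}&N^{}_{\mu}\\ 0&N^{}_{\mu}\end{smallmatrix}\right)$,
which satisfies $(1.15)$ but is \emph{not} similar to
$X=\left(\begin{smallmatrix}N^{}_{\mu}&I\\ 0&N^{}_{\mu}\end{smallmatrix}\right)$,
because any intertwiner forces the $(2,2)$ entry to be $M^{}_{f^{}_{1}}N^{}_{\mu}$, which is never invertible. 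This breaks both directions of your argument where you rely on the normal form, and in particular it destroys the final step of your converse: knowing that $B$ has the same block-size/multiplicity profile as $A$ does \emph{not} yield $A\sim B$, since blocks of equal size over the same measure need not be similar. A second, related gap: in the converse you ``apply Theorem 1.3 directly to $A\oplus B$,'' but $A\oplus B$ need not be of the form $(1.10)$ --- blocks of equal size coming from $A$ and from $B$ carry the \emph{same} measure $\mu$ rather than mutually singular ones, and if they are not similar the commutant (hence its $K^{}_{0}$) is genuinely larger, as the example $T=X\oplus Y$ following Proposition 3.1 shows. So the identification of $K^{}_{0}(\{A\oplus B\}^{\prime})$ with $\mathbb{Z}^{(d)}$-valued Borel functions is not available as a computation; hypothesis (1) is precisely the assumption that rules out the bad case.

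The paper's route avoids all of this. It chooses projections $E^{}_{i},F^{}_{j}$ in $\{T\}^{\prime}$ with $T|^{}_{\mathrm{ran}E^{}_{i}}=A^{}_{n^{}_{i}}$ and $T|^{}_{\mathrm{ran}F^{}_{j}}=B^{}_{l^{}_{j}}$, normalized fibrewise via the trace functions $\mathrm{r}^{}_{i}$, and argues that if some $F^{}_{i}$ were \emph{not} similar to $E^{}_{i}$ in $\{T\}^{\prime}$ then on a set of positive measure the classes $\{[E^{}_{j}]\}\cup\{[F^{}_{i}]\}$ would generate a group of local rank $s+1$, contradicting hypothesis (1). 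Similarity of $E^{}_{i}$ and $F^{}_{i}$ inside $\{T\}^{\prime}$ then gives similarity of the restrictions $A^{}_{n^{}_{i}}\sim B^{}_{l^{}_{i}}$ directly --- this is the information your block-profile bookkeeping cannot see --- and hypothesis (2) gives $m^{}_{i}+k^{}_{i}=2m^{}_{i}$, hence $k^{}_{i}=m^{}_{i}$. You correctly anticipated that extracting rigidity from a bare group isomorphism is the main obstacle, but the obstacle is resolved by the rank argument on fibres, not by a normal form, and the datum to be extracted is similarity of the individual blocks, not merely their sizes and multiplicities.
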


Let the support of every spectral measure $\nu^{}_{nm}$ in the sense
of $(1.10)$ and $(1.11)$ be a single point in $\mathbb{C}$, then
Theorem $1.3$ shows that the strongly irreducible decomposition of
every matrix $A$ in $M^{}_{n}(\mathbb{C})$ is unique up to
similarity, and Theorem $1.4$ characterizes a necessary and
sufficient condition that two matrices are similar. This is
identified with the Jordan canonical form theorem.

This paper is organized as follows. In section $2$, we prove Theorem
$1.2$ and Theorem $1.4$. In section $3$, we develop a method of
decomposing an upper triangular $n$-normal operator $A$ of the
following form with respect to the multiplicity function of the
$(1,1)$ entry:
$$A=\begin{pmatrix}
M^{}_{f^{}_{}}&M^{}_{f^{}_{12}}&\cdots&M^{}_{f^{}_{1n}}\\
0&M^{}_{f^{}_{}}&\cdots&M^{}_{f^{}_{2n}}\\
\vdots&\vdots&\ddots&\vdots\\
0&0&\cdots&M^{}_{f^{}_{}}\\
\end{pmatrix}^{}_{n\times n}
\begin{matrix}
L^{2}(\mu)\\
L^{2}(\mu)\\
\vdots\\
L^{2}(\mu)\\
\end{matrix}, \eqno{(1.16)}$$
where $n<\infty$, $\mu$ are as in $(1.7)$, $f$ and $f^{}_{ij}$ are
in $L^{\infty}(\mu)$ for $1\leq i,j\leq n$ and the inequality
$f^{}_{i,i+1}(\lambda)\neq 0$ holds for almost every $\lambda$ in
the support of $\mu$ and $1\leq i\leq n-1$.

\section{Proofs}

For an $n$-normal operator $A^{}_{}$ in the form as in $(1.6)$, an
application of (\cite{Shi_2}, Lemma $3.1$) shows that for a fixed
$\lambda$ in the support of $\nu$, the operator $A^{}_{}(\lambda)$
is strongly irreducible if and only if
$f^{}_{ii}(\lambda)=f^{}_{nn}(\lambda)$ and
$f^{}_{i,i+1}(\lambda)\neq 0$ hold for $(1\leq i\leq n-1)$.
Therefore for an $n$-normal operator $A$ in the form as in $(1.14)$
and $(1.15)$, $A(\lambda)$ is strongly irreducible for almost every
$\lambda$ in the support of $\mu$ in the sense of $(1.5)$. We need
to mention that the multiplication operators $M^{}_{f^{}_{i,i+1,k}}$
may not be invertible in general. This makes the computation become
more complicated. However, the commutant $\{A^{}_{n}\}^{\prime}_{}$
is a subalgebra of $\{N^{(n)}_{\mu^{}_{}}\}^{\prime}$ by
(\cite{Shi_2}, Lemma $3.2$) for an operator $A^{}_{n}$ in the form
$$A^{}_{n}=\left(\begin{array}{cccccc}
N^{}_{\mu^{}_{}}&M^{}_{f^{}_{12}}&\cdots&M^{}_{f^{}_{1n^{}_{}}}\\
0&N^{}_{\mu^{}_{}}&\cdots&M^{}_{f^{}_{2n^{}_{}}}\\
\vdots&\vdots&\ddots&\vdots\\
0&0&\cdots&N^{}_{\mu^{}_{}}\\
\end{array}\right)_{n^{}\times n^{}}, \eqno(2.1)$$
where the measure $\mu$ is as in $(1.7)$, the function $f^{}_{ij}$
is in $L^{\infty}(\mu^{}_{})$ for $1\leq i,j\leq n$ such that the
inequality
$$f^{}_{i,i+1}(\lambda)\neq 0 \eqno{(2.2)}$$ holds for almost
every $\lambda$ in the support of $\mu$ for $1\leq i\leq n^{}_{}-1$.
Precisely, by (\cite{Shi_2}, Lemma $3.2$), every operator $X^{}_{}$
in $\{A^{}_{n}\}^{\prime}_{}$ is in the form
$$X^{}_{}=\left(\begin{array}{cccccc}
M^{}_{\psi^{}_{}}&M^{}_{\psi^{}_{12}}&M^{}_{\psi^{}_{13}}&\cdots&M^{}_{\psi^{}_{1n}}\\
0&M^{}_{\psi^{}_{}}&M^{}_{\psi^{}_{23}}&\cdots&M^{}_{\psi^{}_{2n}}\\
0&0&M^{}_{\psi^{}_{}}&\cdots&M^{}_{\psi^{}_{3n}}\\
\vdots&\vdots&\vdots&\ddots&\vdots\\
0&0&0&\cdots&M^{}_{\psi^{}_{}}\\
\end{array}\right)^{}_{n\times n}, \eqno(2.3)$$
and in special, every idempotent $E$ in $\{A^{}_{n}\}^{\prime}$ is
in the form $E=M^{(n)}_{\chi^{}_{\Delta}}$ for some characteristic
function $\chi^{}_{\Delta}$ in $L^{\infty}(\mu^{}_{})$, where
$\Delta$ is a Borel subset in the support of $\mu^{}_{}$. Let
$\mathscr{E}^{}_{n}$ denote the set of idempotents in
$\{A^{}_{n}\}^{\prime}_{}$. Then $\mathscr{E}^{}_{n}$ is the only
maximal abelian set of idempotents in $\{A^{}_{n}\}^{\prime}_{}$ and
obviously, the set $\mathscr{E}^{}_{n}$ is bounded. We observe that
the bounded set of idempotents $$\mathscr{E}^{}_{}\triangleq
(\mathscr{E}^{}_{n^{}_{1}}\oplus\cdots\oplus\mathscr{E}^{}_{n^{}_{1}})
\oplus(\mathscr{E}^{}_{n^{}_{2}}\oplus\cdots\oplus\mathscr{E}^{}_{n^{}_{2}})
\oplus(\mathscr{E}^{}_{n^{}_{3}}\oplus\cdots\oplus\mathscr{E}^{}_{n^{}_{3}})
\eqno{(2.4)}$$ ($m^{}_{1}$ copies of $\mathscr{E}^{}_{n^{}_{1}}$,
$m^{}_{2}$ copies of $\mathscr{E}^{}_{n^{}_{2}}$, and $m^{}_{3}$
copies of $\mathscr{E}^{}_{n^{}_{3}}$) is maximal abelian in the
commutant of $A=A^{(m^{}_{1})}_{n^{}_{1}}\oplus
A^{(m^{}_{2})}_{n^{}_{2}}\oplus A^{(m^{}_{3})}_{n^{}_{3}}$ as
mentioned from $(1.13)$ to $(1.15)$. In the rest of this article, we
define $\mathscr{E}^{}_{}$ to be the \textit{standard} bounded
maximal abelian set of idempotents in $\{A\}^{\prime}_{}$ where $A$
is defined as in $(1.13)$. The following two preliminary lemmas are
needed to prove Theorem $1.2$.

\begin{lemma}
Let $A^{}_{n^{}_{1}}$ and $A^{}_{n^{}_{2}}$ $(n^{}_{1}>n^{}_{2})$ be
assumed as in $(1.14)$. Then the following statements hold:
\begin{enumerate}
\item the equality $A^{}_{n^{}_{1}}X=XA^{}_{n^{}_{2}}$ yields that
$X=(X^{\textup{T}}_{1},{\mathbf
0}_{n^{}_{2}\times(n^{}_{1}-n^{}_{2})})^{\textup{T}}$, where
$X^{}_{1}$ is an upper triangular $n^{}_{2}$-by-$n^{}_{2}$
operator-valued matrix such that every entry of $X^{}_{1}$ is in
$\{N^{}_{\mu^{}_{}}\}^{\prime}$, and the transpose of $X^{}_{1}$ is
denoted by $X^{\textup{T}}_{1}$;
\item the equality $A^{}_{n^{}_{2}}Y=YA^{}_{n^{}_{1}}$ yields that
$Y=({\mathbf 0}_{n^{}_{2}\times(n^{}_{1}-n^{}_{2})},Y^{}_{1})$,
where $Y^{}_{1}$ is an upper triangular $n^{}_{2}$-by-$n^{}_{2}$
operator-valued matrix such that every entry of $Y^{}_{1}$ is in
$\{N^{}_{\mu^{}_{}}\}^{\prime}_{}$.
\end{enumerate}
\end{lemma}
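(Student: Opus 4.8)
The plan is to transcribe each of the two intertwining relations into a system of equations among the operator entries of $X$, respectively $Y$, and to solve that system by a sequence of inductions. The only analytic input is the following elementary fact about $N^{}_{\mu}$: if $S\in\mathscr{L}(L^2(\mu))$ and the commutator $N^{}_{\mu}S-SN^{}_{\mu}$ is a multiplication operator, then $S$ already lies in $\{N^{}_{\mu}\}'$ (and the commutator is $0$). To see this, compress the equation by the spectral projections $E(B)=M^{}_{\chi^{}_{B}}$ of $N^{}_{\mu}$: for disjoint Borel sets $B,B'$ one finds $N^{}_{\mu}\bigl(E(B)SE(B')\bigr)-\bigl(E(B)SE(B')\bigr)N^{}_{\mu}=0$, so $E(B)SE(B')\in\{N^{}_{\mu}\}'$ and hence $E(B)SE(B')=0$; taking $B'$ complementary to $B$ then gives $E(B)S=SE(B)$ for all $B$, whence $S\in\{N^{}_{\mu}\}'$ (and, $N^{}_{\mu}$ being star-cyclic, $S$ is itself a multiplication operator).

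For (1), write $A^{}_{n^{}_{k}}=N^{(n^{}_{k})}_{\mu}+F^{}_{k}$ with $F^{}_{k}$ strictly upper triangular, $(F^{}_{k})^{}_{ij}=M^{}_{f^{}_{ij,k}}$ for $i<j$, and put $X=(X^{}_{ij})$, $1\le i\le n^{}_{1}$, $1\le j\le n^{}_{2}$. The $(i,j)$ component of $A^{}_{n^{}_{1}}X=XA^{}_{n^{}_{2}}$ is
$$N^{}_{\mu}X^{}_{ij}-X^{}_{ij}N^{}_{\mu}=\sum_{k<j}X^{}_{ik}M^{}_{f^{}_{kj,2}}-\sum_{k>i}M^{}_{f^{}_{ik,1}}X^{}_{kj},$$
so $X^{}_{ij}$ is coupled only to entries strictly to its left in row $i$ and strictly below it in column $j$. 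Sweeping the entries column by column ($j=1,\dots,n^{}_{2}$) and, within each column, from the bottom row upward ($i=n^{}_{1},\dots,1$), the right-hand side is at every step already known to be a multiplication operator, so the fact above gives $X^{}_{ij}=M^{}_{\psi^{}_{ij}}$ and, at the same time, the scalar identity $\sum_{k<j}\psi^{}_{ik}f^{}_{kj,2}=\sum_{k>i}f^{}_{ik,1}\psi^{}_{kj}$ a.e.\ on the support of $\mu$. One then proves $\psi^{}_{ij}=0$ for $i>j$ by induction on $j$: assuming $\psi^{}_{pk}=0$ whenever $k<j$ and $p>k$, the left side of the identity at $(p,j)$ vanishes for every $p\ge j$, leaving $\sum_{k=p+1}^{n^{}_{1}}f^{}_{pk,1}\psi^{}_{kj}=0$ for $p=j,\dots,n^{}_{1}-1$; a descending induction on $p$, using $f^{}_{p,p+1,1}(\lambda)\ne0$ a.e.\ (the hypothesis $(1.15)$), forces $\psi^{}_{p+1,j}=0$, i.e.\ $\psi^{}_{kj}=0$ for $k=j+1,\dots,n^{}_{1}$. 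Since $i>j$ means $i\ge n^{}_{2}+1$ or $1\le j<i\le n^{}_{2}$, this says exactly that the last $n^{}_{1}-n^{}_{2}$ rows of $X$ are zero and the top $n^{}_{2}\times n^{}_{2}$ block is upper triangular with entries in $\{N^{}_{\mu}\}'$, which is assertion (1).

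Statement (2) is proved in the same spirit. For $A^{}_{n^{}_{2}}Y=YA^{}_{n^{}_{1}}$, with $Y=(Y^{}_{ij})$, $1\le i\le n^{}_{2}$, $1\le j\le n^{}_{1}$, the $(i,j)$ component again couples $Y^{}_{ij}$ only to entries to its left and below, so the same left-to-right, bottom-up sweep shows that every $Y^{}_{ij}$ is a multiplication operator $M^{}_{\varphi^{}_{ij}}$, together with the companion scalar identities; a nested induction — outer on the row index $i$ descending from $n^{}_{2}$, inner on the column index $j$ ascending — then forces $\varphi^{}_{ij}=0$ whenever $j-i<n^{}_{1}-n^{}_{2}$, again by playing off the non-vanishing of $f^{}_{i,i+1,k}$. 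That is precisely $Y=({\mathbf 0}^{}_{n^{}_{2}\times(n^{}_{1}-n^{}_{2})},Y^{}_{1})$ with $Y^{}_{1}$ upper triangular and entries in $\{N^{}_{\mu}\}'$. (Alternatively, once the first sweep has placed all entries of $Y$ in the commutative algebra $\{N^{}_{\mu}\}'$, one may transpose the relation entrywise and conjugate by the order-reversing permutation matrices to reduce it to an instance of (1).)

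The conceptual content is slight; the work is the bookkeeping. The main point to get right is the sweep order, so that the coupling in the component equations always points to entries already treated, and then the observation that the residual scalar identities, combined with $f^{}_{i,i+1,k}\ne0$, are exactly what collapses the sub-diagonal entries to $0$. The only genuinely analytic step is the commutator fact about $N^{}_{\mu}$ in the first paragraph; everything after it is linear algebra over the abelian algebra $\{N^{}_{\mu}\}'$.
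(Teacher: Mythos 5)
Your proof is correct and follows essentially the same two-step strategy as the paper's: first force every entry of the intertwiner into $\{N^{}_{\mu}\}'=L^{\infty}(\mu)$ by means of the spectral projections of $N^{}_{\mu}$, then run the double induction on the resulting scalar identities, using $f^{}_{i,i+1,k}(\lambda)\neq 0$ a.e.\ to annihilate the entries below the (shifted) diagonal. The only difference is cosmetic and lies in the first step, where the paper splits $\sigma(N^{}_{\mu})$ into two pieces carrying mutually singular measures and kills the off-diagonal blocks row by row, whereas you package the same fact as a single commutator lemma applied in a sweep; the inductions that follow are the same as the paper's.
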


\begin{proof}
If $A^{}_{n^{}_{1}}=A^{}_{n^{}_{2}}$, then this lemma is identified
with (\cite{Shi_2}, Lemma $3.2$). For the sake of simplicity, let
operators $A^{}_{n^{}_{1}}$ and $A^{}_{n^{}_{2}}$ be in the form
$$A^{}_{n^{}_{2}}=\left(\begin{array}{cccccc}
N^{}_{\mu^{}_{}}&M^{}_{f^{}_{12}}&M^{}_{f^{}_{13}}&\cdots&M^{}_{f^{}_{1n^{}_{2}}}\\
0&N^{}_{\mu^{}_{}}&M^{}_{f^{}_{23}}&\cdots&M^{}_{f^{}_{2n^{}_{2}}}\\
0&0&N^{}_{\mu^{}_{}}&\cdots&M^{}_{f^{}_{3n^{}_{2}}}\\
\vdots&\vdots&\vdots&\ddots&\vdots\\
0&0&0&\cdots&N^{}_{\mu^{}_{}}\\
\end{array}\right)_{n^{}_{2}\times n^{}_{2}}
\begin{matrix}
L^{2}(\mu)\\
L^{2}(\mu)\\
L^{2}(\mu)\\
\vdots\\
L^{2}(\mu)\\
\end{matrix} \eqno(2.5)$$ and
$$A^{}_{n^{}_{1}}=\left(\begin{array}{cccccc}
N^{}_{\mu^{}_{}}&M^{}_{g^{}_{12}}&M^{}_{g^{}_{13}}&\cdots&M^{}_{g^{}_{1n^{}_{1}}}\\
0&N^{}_{\mu^{}_{}}&M^{}_{g^{}_{23}}&\cdots&M^{}_{g^{}_{2n^{}_{1}}}\\
0&0&N^{}_{\mu^{}_{}}&\cdots&M^{}_{g^{}_{3n^{}_{1}}}\\
\vdots&\vdots&\vdots&\ddots&\vdots\\
0&0&0&\cdots&N^{}_{\mu^{}_{}}\\
\end{array}\right)_{n^{}_{1}\times n^{}_{1}}
\begin{matrix}
L^{2}(\mu)\\
L^{2}(\mu)\\
L^{2}(\mu)\\
\vdots\\
L^{2}(\mu)\\
\end{matrix}. \eqno(2.6)$$

Let $E^{}_{\mu}(\cdot)$ be the spectral measure for $N^{}_{\mu}$.
For a Borel subset $\Delta$ of $\sigma(N^{}_{\mu})$ such that
$E^{}_{\mu}(\Delta)$ is a nontrivial projection in
$\{N^{}_{\mu}\}^{\prime}$, we write $P^{}_{1}=E^{}_{\mu}(\Delta)$
and $P^{}_{2}=E^{}_{\mu}(\sigma(N^{}_{\mu})\backslash\Delta)$,
meanwhile write $\mu^{}_{1}$ for $\mu|^{}_{\Delta}$ and $\mu^{}_{2}$
for $\mu|^{}_{\sigma(N^{}_{\mu})\backslash\Delta}$. Hence the
operators $A^{}_{n^{}_{1}}$, $A^{}_{n^{}_{2}}$ and $X$ can be
expressed in the form
$$A^{}_{n^{}_{1}}=\begin{pmatrix}
A^{}_{n^{}_{1},1}&\bf{0}\\
\bf{0}&A^{}_{n^{}_{1},2}
\end{pmatrix}
\begin{matrix}
\mbox{ran}P^{(n^{}_{1})}_{1}\\
\mbox{ran}P^{(n^{}_{1})}_{2}\\
\end{matrix},\quad
A^{}_{n^{}_{2}}=\begin{pmatrix}
A^{}_{n^{}_{2},1}&\bf{0}\\
\bf{0}&A^{}_{n^{}_{2},2}
\end{pmatrix}
\begin{matrix}
\mbox{ran}P^{(n^{}_{2})}_{1}\\
\mbox{ran}P^{(n^{}_{2})}_{2}\\
\end{matrix},   $$
$$\mbox{and}\quad
X=\begin{pmatrix}
X^{}_{11}&X^{}_{12}\\
X^{}_{21}&X^{}_{22}\\
\end{pmatrix} \eqno(2.7)$$ where
$$A^{}_{n^{}_{1},i}=\left(\begin{array}{cccccc}
N^{}_{\mu^{}_{i}}&M^{}_{g^{}_{12,i}}&M^{}_{g^{}_{13,i}}&\cdots&M^{}_{g^{}_{1n^{}_{1},i}}\\
0&N^{}_{\mu^{}_{i}}&M^{}_{g^{}_{23,i}}&\cdots&M^{}_{g^{}_{2n^{}_{1},i}}\\
0&0&N^{}_{\mu^{}_{i}}&\cdots&M^{}_{g^{}_{3n^{}_{1},i}}\\
\vdots&\vdots&\vdots&\ddots&\vdots\\
0&0&0&\cdots&N^{}_{\mu^{}_{i}}\\
\end{array}\right)_{n^{}_{1}\times n^{}_{1}}
\begin{matrix}
\mbox{ran}P^{}_{i}\\
\mbox{ran}P^{}_{i}\\
\mbox{ran}P^{}_{i}\\
\vdots\\
\mbox{ran}P^{}_{i}\\
\end{matrix},\quad i=1,2, \eqno(2.8)$$
and
$$A^{}_{n^{}_{2},i}=\left(\begin{array}{cccccc}
N^{}_{\mu^{}_{i}}&M^{}_{f^{}_{12,i}}&M^{}_{f^{}_{13,i}}&\cdots&M^{}_{f^{}_{1n^{}_{2},i}}\\
0&N^{}_{\mu^{}_{i}}&M^{}_{f^{}_{23,i}}&\cdots&M^{}_{f^{}_{2n^{}_{2},i}}\\
0&0&N^{}_{\mu^{}_{i}}&\cdots&M^{}_{f^{}_{3n^{}_{2},i}}\\
\vdots&\vdots&\vdots&\ddots&\vdots\\
0&0&0&\cdots&N^{}_{\mu^{}_{i}}\\
\end{array}\right)_{n^{}_{2}\times n^{}_{2}}
\begin{matrix}
\mbox{ran}P^{}_{i}\\
\mbox{ran}P^{}_{i}\\
\mbox{ran}P^{}_{i}\\
\vdots\\
\mbox{ran}P^{}_{i}\\
\end{matrix},\quad i=1,2. \eqno(2.9)$$
The equality $A^{}_{n^{}_{1}}X=XA^{}_{n^{}_{2}}$ yields
$A^{}_{n^{}_{1},1}X^{}_{12}=X^{}_{12}A^{}_{n^{}_{2},2}$. And this
equality can be expressed in the form
$$\begin{pmatrix}
N^{}_{\mu^{}_{1}}&M^{}_{g^{}_{12,1}}&\cdots&M^{}_{g^{}_{1n^{}_{1},1}}\\
0&N^{}_{\mu^{}_{1}}&\cdots&M^{}_{g^{}_{2n^{}_{1},1}}\\
\vdots&\vdots&\ddots&\vdots\\
0&0&\cdots&N^{}_{\mu^{}_{1}}\\
\end{pmatrix}
\begin{pmatrix}
X^{}_{12,11}&X^{}_{12,12}&\cdots&X^{}_{12,1n^{}_{2}}\\
X^{}_{12,21}&X^{}_{12,22}&\cdots&X^{}_{12,2n^{}_{2}}\\
\vdots&\vdots&\ddots&\vdots\\
X^{}_{12,n^{}_{1}1}&X^{}_{12,n^{}_{1}2}&\cdots&X^{}_{12,n^{}_{1}n^{}_{2}}\\
\end{pmatrix} \eqno(2.10)$$
$$=\begin{pmatrix}
X^{}_{12,11}&X^{}_{12,12}&\cdots&X^{}_{12,1n^{}_{2}}\\
X^{}_{12,21}&X^{}_{12,22}&\cdots&X^{}_{12,2n^{}_{2}}\\
\vdots&\vdots&\ddots&\vdots\\
X^{}_{12,n^{}_{1}1}&X^{}_{12,n^{}_{1}2}&\cdots&X^{}_{12,n^{}_{1}n^{}_{2}}\\
\end{pmatrix}
\begin{pmatrix}
N^{}_{\mu^{}_{2}}&M^{}_{f^{}_{12,2}}&\cdots&M^{}_{f^{}_{1n^{}_{2},2}}\\
0&N^{}_{\mu^{}_{2}}&\cdots&M^{}_{f^{}_{2n^{}_{2},2}}\\
\vdots&\vdots&\ddots&\vdots\\
0&0&\cdots&N^{}_{\mu^{}_{2}}\\
\end{pmatrix}.$$

Since the measures $\mu^{}_{1}$ and $\mu^{}_{2}$ are mutually
singular, the equality
$N^{}_{\mu^{}_{1}}X^{}_{12,n^{}_{1}1}=X^{}_{12,n^{}_{1}1}N^{}_{\mu^{}_{2}}$
yields that $X^{}_{12,n^{}_{1}1}=0$. Thus the equality
$N^{}_{\mu^{}_{1}}X^{}_{12,n^{}_{1}2}=X^{}_{12,n^{}_{1}2}N^{}_{\mu^{}_{2}}$
yields that $X^{}_{12,n^{}_{1}2}=0$. By this method, we obtain that
every entry in the $n^{}_{1}$-th row of $X^{}_{12}$ is zero. The
same result holds for the the $(n^{}_{1}-1)$-th row of $X^{}_{12}$.
By induction, we obtain that $X^{}_{12}=\bf{0}$. By a similar
discussion, we have that $X^{}_{21}=\bf{0}$. This means that the
equality $P^{(n^{}_{1})}_{i}X=XP^{(n^{}_{2})}_{i}$ holds for every
Borel subset $\Delta$ of $\sigma(N^{}_{\mu})$. Therefore the
operator $X$ can be expressed in the form
$$X=\begin{pmatrix}
M^{}_{h^{}_{11}}&M^{}_{h^{}_{12}}&M^{}_{h^{}_{13}}&\cdots&M^{}_{h^{}_{1n^{}_{2}}}\\
M^{}_{h^{}_{21}}&M^{}_{h^{}_{22}}&M^{}_{h^{}_{23}}&\cdots&M^{}_{h^{}_{2n^{}_{2}}}\\
M^{}_{h^{}_{31}}&M^{}_{h^{}_{32}}&M^{}_{h^{}_{33}}&\cdots&M^{}_{h^{}_{3n^{}_{2}}}\\
\vdots&\vdots&\vdots&\ddots&\vdots\\
M^{}_{h^{}_{n^{}_{1}1}}&M^{}_{h^{}_{n^{}_{1}2}}&M^{}_{h^{}_{n^{}_{1}3}}&\cdots&M^{}_{h^{}_{n^{}_{1}n^{}_{2}}}\\
\end{pmatrix}_{n^{}_{1}\times n^{}_{2}}, \eqno(2.11)$$
where $h^{}_{ij}$ is in $L^{\infty}(\mu)$, $1\leq i\leq n^{}_{1}$
and $1\leq j\leq n^{}_{2}$. By the assumption, we have that
$f^{}_{i,i+1}(\lambda)\neq 0$ and $g^{}_{j,j+1}(\lambda)\neq 0$ for
$1\leq i\leq n^{}_{2}-1$, $1\leq j\leq n^{}_{1}-1$, and almost every
$\lambda$ in $\sigma(N^{}_{\mu})$. The equality
$A^{}_{n^{}_{1}}X=XA^{}_{n^{}_{2}}$ yields that
$$N^{}_{\mu^{}_{}}M^{}_{h^{}_{n^{}_{1}-1,1}}
+M^{}_{g^{}_{n^{}_{1}-1,n^{}_{1}}}M^{}_{h^{}_{n^{}_{1}1}}
=M^{}_{h^{}_{n^{}_{1}-1,1}}N^{}_{\mu^{}_{}}. \eqno{(2.12)}$$ This
equality yields that $M^{}_{h^{}_{n^{}_{1}1}}=0$. Thus the equality
$$N^{}_{\mu^{}_{}}M^{}_{h^{}_{n^{}_{1}-2,1}}
+M^{}_{g^{}_{n^{}_{1}-2,n^{}_{1}-1}}M^{}_{h^{}_{n^{}_{1}-1,1}}
=M^{}_{h^{}_{n^{}_{1}-2,1}}N^{}_{\mu^{}_{}} \eqno{(2.13)}$$ yields
that $M^{}_{h^{}_{n^{}_{1}-1,1}}=0$. By computation, we obtain that
$M^{}_{h^{}_{j,1}}=0$ for $2\leq j\leq n^{}_{1}$.

By the equality $A^{}_{n^{}_{1}}X=XA^{}_{n^{}_{2}}$, we have
$$N^{}_{\mu^{}_{}}M^{}_{h^{}_{n^{}_{1}-1,2}}
+M^{}_{g^{}_{n^{}_{1}-1,n^{}_{1}}}M^{}_{h^{}_{n^{}_{1}2}}
=M^{}_{h^{}_{n^{}_{1}-1,2}}N^{}_{\mu^{}_{}}. \eqno{(2.14)}$$ This
yields that $M^{}_{h^{}_{n^{}_{1}2}}=0$. Thus the equality
$$N^{}_{\mu^{}_{}}M^{}_{h^{}_{n^{}_{1}-2,2}}
+M^{}_{g^{}_{n^{}_{1}-2,n^{}_{1}-1}}M^{}_{h^{}_{n^{}_{1}-1,2}}
=M^{}_{h^{}_{n^{}_{1}-2,2}}N^{}_{\mu^{}_{}} \eqno{(2.15)}$$ yields
that $M^{}_{h^{}_{n^{}_{1}-1,2}}=0$. By computation, we obtain that
$M^{}_{h^{}_{j,2}}=0$ for $3\leq j\leq n^{}_{1}$. By induction, we
have $M^{}_{h^{}_{j,i}}=0$ for $i<j$. The proof of the first
assertion is finished.

In the proof of the second assertion, by a similar computation, we
obtain that $Y$ is an $n^{}_{2}$-by-$n^{}_{1}$ operator-valued
matrix as in $(2.11)$. Therefore, we apply the equality
$A^{}_{n^{}_{2}}Y=YA^{}_{n^{}_{1}}$ to obtain that
$$N^{}_{\mu^{}_{}}M^{}_{h^{}_{n^{}_{2}2}}
=M^{}_{h^{}_{n^{}_{2}1}}M^{}_{g^{}_{12}}
+M^{}_{h^{}_{n^{}_{2}2}}N^{}_{\mu^{}_{}}. \eqno{(2.16)}$$ This
equality yields that $M^{}_{h^{}_{n^{}_{2}1}}=0$. Thus the equality
$$N^{}_{\mu^{}_{}}M^{}_{h^{}_{n^{}_{2}3}}
=M^{}_{h^{}_{n^{}_{2}2}}M^{}_{g^{}_{23}}
+M^{}_{h^{}_{n^{}_{2}3}}N^{}_{\mu^{}_{}} \eqno{(2.17)}$$ yields that
$M^{}_{h^{}_{n^{}_{2}2}}=0$. By computation, we obtain that
$M^{}_{h^{}_{n^{}_{2}j}}=0$ for $1\leq j\leq n^{}_{1}-1$.

By the equality $A^{}_{n^{}_{2}}Y=YA^{}_{n^{}_{1}}$, we have
$$N^{}_{\mu^{}_{}}M^{}_{h^{}_{n^{}_{2}-1,2}}
=M^{}_{h^{}_{n^{}_{2}-1,1}}M^{}_{g^{}_{12}}
+M^{}_{h^{}_{n^{}_{2}-1,2}}N^{}_{\mu^{}_{}}. \eqno{(2.18)}$$ This
yields that $M^{}_{h^{}_{n^{}_{2}-1,1}}=0$. Thus the equality
$$N^{}_{\mu^{}_{}}M^{}_{h^{}_{n^{}_{2}-1,3}}
=M^{}_{h^{}_{n^{}_{2}-1,2}}M^{}_{g^{}_{23}}
+M^{}_{h^{}_{n^{}_{2}-1,3}}N^{}_{\mu^{}_{}} \eqno{(2.19)}$$ yields
that $M^{}_{h^{}_{n^{}_{2}-1,2}}=0$. By computation, we obtain that
$M^{}_{h^{}_{n^{}_{2}-1,j}}=0$ for $1\leq j\leq n^{}_{1}-2$. By
induction, we have $M^{}_{h^{}_{i,j}}=0$ for $j\leq
n^{}_{1}-n^{}_{2}+i-1$. Therefore, the proof of the second assertion
is finished.

A fact we need to mention is that if $n^{}_{1}=n^{}_{2}$, then $X$
is an $n^{}_{1}$-by-$n^{}_{1}$ upper triangular operator-valued
matrix such that every entry of $X$ is in
$\{N^{}_{\mu^{}_{}}\}^{\prime}$ and the entries of $X$ have further
relations with others.
\end{proof}

\begin{lemma}
For an operator $A$ defined from $(1.13)$ to $(1.15)$ and every
idempotent $P$ in $\{A\}^{\prime}$, there exists an invertible
operator $X$ in $\{A\}^{\prime}$ such that $XPX^{-1}_{}$ is in
$\mathscr {E}$ $($defined as in $(2.4)$$)$.
\end{lemma}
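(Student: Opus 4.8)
The plan is to expose the ideal structure of $\{A\}'$ that is implicit in Lemma 2.1 and in $(2.3)$, and then to reduce the statement to two standard facts: idempotents over an abelian von Neumann algebra can be diagonalised by an invertible element, and two idempotents congruent modulo a nilpotent ideal are conjugate by an invertible of the form ``identity plus that ideal''.

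First I would fix block notation: with $A=A^{(m^{}_{1})}_{n^{}_{1}}\oplus A^{(m^{}_{2})}_{n^{}_{2}}\oplus A^{(m^{}_{3})}_{n^{}_{3}}$, index $T\in\{A\}'$ by its blocks $T^{}_{(k,a)(l,b)}$ $(1\le k,l\le 3$, $1\le a\le m^{}_{k}$, $1\le b\le m^{}_{l})$, each satisfying $A^{}_{n^{}_{k}}T^{}_{(k,a)(l,b)}=T^{}_{(k,a)(l,b)}A^{}_{n^{}_{l}}$. By $(2.3)$, each diagonal block $T^{}_{(k,a)(k,b)}$ is upper triangular with common diagonal entry $M^{}_{\psi^{(k)}_{ab}}$ for some $\psi^{(k)}_{ab}\in L^{\infty}(\mu)$; by Lemma 2.1, each off-size block $(k\ne l)$ is a matrix of multiplication operators of the degenerate triangular shape described there. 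I would then check that $\rho:\{A\}'\to\bigoplus_{k=1}^{3}M^{}_{m^{}_{k}}(L^{\infty}(\mu))$, $\rho(T)=\big(\,(\psi^{(k)}_{ab})_{a,b}\,\big)_{1\le k\le 3}$, is a \emph{surjective algebra homomorphism} with \emph{nilpotent kernel} $\mathscr{J}$, and that $\rho$ carries $\mathscr{E}$ bijectively onto the set of diagonal idempotents of $\bigoplus_{k}M^{}_{m^{}_{k}}(L^{\infty}(\mu))$. Multiplicativity reduces to the fact that in any product $TS$ the off-size terms $\sum_{l\ne k}T^{}_{(k,a)(l,c)}S^{}_{(l,c)(k,b)}$ contribute nothing to the diagonal of the $(k,a)(k,b)$ block, since the product of the two degenerate shapes of Lemma 2.1 lands strictly above that diagonal once $n^{}_{k}\ne n^{}_{l}$; surjectivity and the statement about $\mathscr{E}$ are witnessed by the scalar lift $T^{}_{(k,a)(k,b)}=M^{(n^{}_{k})}_{\phi^{(k)}_{ab}}$ with all off-size blocks $0$. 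For nilpotency I would pass to fibres: by (\cite{Shi_2}, Lemma $3.1$) and $(1.15)$, $A^{}_{n^{}_{k}}(\lambda)$ is strongly irreducible, hence similar to one $n^{}_{k}\times n^{}_{k}$ Jordan block, for almost every $\lambda$, so $\{A(\lambda)\}'$ has one fixed finite dimension and $\rho$ at $\lambda$ is the surjection onto its semisimple quotient $\bigoplus_{k}M^{}_{m^{}_{k}}(\mathbb{C})$, whose kernel is the Jacobson radical; hence for some fixed $d$ one has $T^{}_{1}(\lambda)\cdots T^{}_{d}(\lambda)=0$ for all $T^{}_{1},\dots,T^{}_{d}\in\mathscr{J}$ and a.e. $\lambda$, i.e. $\mathscr{J}^{d}=0$.

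Granting this, the conclusion is immediate. Let $P\in\{A\}'$ be an idempotent and $\bar P=\rho(P)$. Since every idempotent in a matrix algebra over the abelian von Neumann algebra $L^{\infty}(\mu)$ is similar, by an invertible element of that matrix algebra, to a diagonal idempotent with characteristic-function entries (first conjugate to a projection, then diagonalise it fibrewise by reduction theory), there is an invertible $\bar V$ with $\bar V\bar P\bar V^{-1}=\rho(P^{}_{0})$ for a unique $P^{}_{0}\in\mathscr{E}$. Lift $\bar V$ to $V\in\{A\}'$ by the scalar lift; as $\rho(V)$ is invertible and $\mathscr{J}$ is nilpotent, $V$ is invertible in $\{A\}'$. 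Then $Q:=VPV^{-1}$ is an idempotent with $Q-P^{}_{0}\in\mathscr{J}$, so $Z:=P^{}_{0}Q+(I-P^{}_{0})(I-Q)$ satisfies $ZQ=P^{}_{0}Z$ and $Z\in I+\mathscr{J}$; hence $Z$ is invertible in $\{A\}'$ and $ZQZ^{-1}=P^{}_{0}$. Taking $X:=ZV$, which is invertible in $\{A\}'$, gives $XPX^{-1}=P^{}_{0}\in\mathscr{E}$, as desired.

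I expect the main obstacle to be the middle paragraph — establishing that $\rho$ is a well-defined surjective homomorphism with nilpotent kernel, and in particular the cancellation of the off-size contributions to the diagonal. This is exactly where the \emph{precise} degenerate shape of the intertwiners in Lemma 2.1, rather than merely the condition that $M^{}_{f^{}_{i,i+1,k}}$ is nonzero a.e., is needed, and it is what absorbs the difficulty, flagged at the start of this section, that the $M^{}_{f^{}_{i,i+1,k}}$ need not be invertible. The other two ingredients are routine; alternatively one could avoid $\rho$ and construct $X$ directly and fibrewise via von Neumann's reduction theory, in the explicit spirit of the proof of Lemma 2.1.
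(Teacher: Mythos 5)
Your proposal is correct and follows essentially the same route as the paper: the paper likewise passes to the split quotient $\{A\}'\to\mathscr{B}\cong\bigoplus_{k}M_{m_k}(L^{\infty}(\mu))$ (its $(2.45)$--$(2.46)$), shows the idempotent is similar to its block-diagonal image because their difference lies in the radical-type ideal, and then diagonalises the resulting idempotent of $M_{m_k}(L^{\infty}(\mu))$. The only local variations are that the paper establishes membership of $C-C'$ in the Jacobson radical by a spectral argument rather than your (also valid, and slightly stronger) fibrewise nilpotency bound, and it handles the final diagonalisation via Azoff's Borel selection theorem to guarantee a bounded measurable field of conjugating invertibles, where you invoke standard von Neumann algebra facts.
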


\begin{proof}
As defined from $(1.13)$ to $(1.15)$, we have
$A=A^{(m^{}_{1})}_{n^{}_{1}}\oplus A^{(m^{}_{2})}_{n^{}_{2}}\oplus
A^{(m^{}_{3})}_{n^{}_{3}}$ for positive integers
$n^{}_{1}>n^{}_{2}>n^{}_{3}$.

Let $B$ be an operator in $\{A\}^{\prime}$. Then $B$ can be
expressed in the form
$$B=\begin{pmatrix}
B^{}_{11}&B^{}_{12}&B^{}_{13}&\\
B^{}_{21}&B^{}_{22}&B^{}_{23}&\\
B^{}_{31}&B^{}_{32}&B^{}_{33}&\\
\end{pmatrix},\eqno(2.20)$$
where
$$B^{}_{ij}=\begin{pmatrix}
B^{}_{{ij};11}&\cdots&B^{}_{ij;1m^{}_{j}}\\
\vdots&\ddots&\vdots\\
B^{}_{ij;m^{}_{i}1}&\cdots&B^{}_{ij;m^{}_{i}m^{}_{j}}\\
\end{pmatrix}^{}_{m^{}_{i}\times m^{}_{j}},\eqno(2.21)$$
and $B^{}_{ij;st}$ is in the set $\{X\mbox{ is bounded
linear}:A^{}_{n^{}_{i}}X=XA^{}_{n^{}_{j}}\}$, for $1\leq i,j\leq 3$.
For $B$ in $\{A\}^{\prime}$, there exists a unitary operator $U$
which is a composition of finitely many row-switching
transformations such that $C=UBU^{*}$ is in the form
$$C=\begin{pmatrix}
C^{}_{11}&\cdots&C^{}_{1n^{}_{1}}\\
\vdots&\ddots&\vdots\\
C^{}_{n^{}_{1}1}&\cdots&C^{}_{n^{}_{1}n^{}_{1}}\\
\end{pmatrix},\eqno(2.22)$$
where $C^{}_{lk}$ consists of the $(l,k)$ entries of each
$B^{}_{ij;st}$, and the relative positions of these entries stay
invariant in $C^{}_{lk}$. Notice that $C^{}_{lk}$ is not square for
$l\neq k$, and $C^{}_{11}$, $C^{}_{n^{}_{3}+1,n^{}_{3}+1}$ and
$C^{}_{n^{}_{2}+1,n^{}_{2}+1}$ are not of the same size. By Lemma
$2.1$, we have that $C^{}_{ij}=0$ for $i>j$.

For $1\leq i\leq n_{3}$, the block entry $C^{}_{ii}$ is in the form
$$C^{}_{ii}=\begin{pmatrix}
C^{}_{ii;11}&C^{}_{ii;12}&C^{}_{ii;13}\\
\bf{0}&C^{}_{ii;22}&C^{}_{ii;23}\\
\bf{0}&\bf{0}&C^{}_{ii;33}\\
\end{pmatrix},\eqno(2.23)$$
where
$$C^{}_{ii;kl}=\begin{pmatrix}
b^{ii}_{kl;11}&\cdots&b^{ii}_{kl;1m^{}_{l}}\\
\vdots&\ddots&\vdots\\
b^{ii}_{kl;m^{}_{k}1}&\cdots&b^{ii}_{kl;m^{}_{k}m^{}_{l}}\\
\end{pmatrix}^{}_{m^{}_{k}\times m^{}_{l}},\eqno(2.24)$$
and the operator $b^{ii}_{kl;st}$ is the $(i,i)$ entry of the block
$B^{}_{kl;st}$, for $1\leq k,l\leq 3$, and $1\leq s\leq m^{}_{k}$,
and $1\leq t\leq m^{}_{l}$.

For $n^{}_{3}< i\leq n^{}_{2}$, the block entry $C^{}_{ii}$ is in
the form
$$\begin{pmatrix}
b^{ii}_{11;11}&\cdots&b^{ii}_{11;1m^{}_{1}}
&b^{ii}_{12;11}&\cdots&b^{ii}_{12;1m^{}_{2}}\\
\vdots&\ddots&\vdots&\vdots&\ddots&\vdots\\
b^{ii}_{11;m^{}_{1}1}&\cdots&b^{ii}_{11;m^{}_{1}m^{}_{1}}
&b^{ii}_{12;m^{}_{1}1}&\cdots&b^{ii}_{12;m^{}_{1}m^{}_{2}}\\
&&&b^{ii}_{22;11}&\cdots&b^{ii}_{22;1m^{}_{2}}\\
&{\bf{0}}_{m^{}_{2}\times m^{}_{1}}&&\vdots&\ddots&\vdots\\
&&&b^{ii}_{22;m^{}_{2}1}&\cdots&b^{ii}_{22;m^{}_{2}m^{}_{2}}\\
\end{pmatrix}, \eqno(2.25)$$

and for $n^{}_{2}< j\leq n^{}_{1}$ the block entry $C^{}_{jj}$ is in
the form
$$\begin{pmatrix}
b^{jj}_{11;11}&\cdots&b^{jj}_{11;1m^{}_{1}}\\
\vdots&\ddots&\vdots\\
b^{jj}_{11;m^{}_{1}1}&\cdots&b^{jj}_{11;m^{}_{1}m^{}_{1}}\\
\end{pmatrix},\eqno(2.26)$$
where the operator $b^{ii}_{kl;st}$ is the $(i,i)$ entry of the
block $B^{}_{kl;st}$, for $1\leq k,l\leq 2$, and $1\leq s\leq
m^{}_{k}$, and $1\leq t\leq m^{}_{l}$, and the operator
$b^{jj}_{11;st}$ is the $(j,j)$ entry of the block $B^{}_{11;st}$,
for $1\leq s\leq m^{}_{1}$, and $1\leq t\leq m^{}_{1}$.

Let $C^{\prime}_{ii}$ be the block diagonal matrix in which the
diagonal blocks are the same as in $C^{}_{ii}$. For example, the
operator $C^{\prime}_{11}$ is in the form
$$C^{\prime}_{11}=\begin{pmatrix}
C^{}_{11;11}&\bf{0}&\bf{0}\\
\bf{0}&C^{}_{11;22}&\bf{0}\\
\bf{0}&\bf{0}&C^{}_{11;33}\\
\end{pmatrix}. \eqno(2.27)$$

We observe that an operator $C^{\prime}$ in the form
$$C^{\prime}=\begin{pmatrix}
C^{\prime}_{11}&\bf{0}&\cdots&\bf{0}\\
\bf{0}&C^{\prime}_{22}&\cdots&\bf{0}\\
\vdots&\vdots&\ddots&\vdots\\
\bf{0}&\bf{0}&\cdots&C^{\prime}_{n^{}_{1}n^{}_{1}}\\
\end{pmatrix} \eqno(2.28)$$
is in the commutant $\{UAU^{*}\}^{\prime}$. Let
$\sigma^{}_{{\{UAU^{*}\}^{\prime}}}(C-C^{\prime})$ denote the
spectrum of $C-C^{\prime}$ in the unital Banach algebra
$\{UAU^{*}\}^{\prime}$. Then for every operator $D$ in the commutant
$\{UAU^{*}\}^{\prime}$, we obtain the following equality
$$\sigma^{}_{{\{UAU^{*}\}^{\prime}}}(D(C-C^{\prime}))
=\sigma^{}_{{\{UAU^{*}\}^{\prime}}}((C-C^{\prime})D)=\{0\}.
\eqno(2.29)$$ Therefore, the operator $C-C^{\prime}$ is in the
Jacobson radical of $\{UAU^{*}\}^{\prime}$ denoted by
$\mbox{Rad}(\{UAU^{*}\}^{\prime})$.

Let $C$ be an idempotent in $\{UAU^{*}\}^{\prime}$. Then
$C^{\prime}$ is also an idempotent in $\{UAU^{*}\}^{\prime}$. Notice
that $2C^{\prime}-I$ is invertible in $\{UAU^{*}\}^{\prime}$. Then
the equality
$$(2C^{\prime}-I)(C+C^{\prime}-I)=I+(2C^{\prime}-I)(C-C^{\prime})
\eqno(2.30)$$ yields that the operator $C+C^{\prime}-I$ is
invertible in $\{UAU^{*}\}^{\prime}$, since $C-C^{\prime}$ is in
$\mbox{Rad}(\{UAU^{*}\}^{\prime})$. Therefore, we obtain the
equality $(C+C^{\prime}-I)C=C^{\prime}(C+C^{\prime}-I)$ which means
that the operators $C$ and $C^{\prime}$ are similar in
$\{UAU^{*}\}^{\prime}$.

Next, it suffices to show that the $(1,1)$ block of
$C^{\prime}_{11}$ denoted by $C^{}_{11;11}$ is similar to an element
of the standard bounded maximal abelian set of idempotents in
$M^{}_{m^{}_{1}}(L^{\infty}(\mu^{}_{}))$.

We assert that for every positive integer $k$, there exists a
positive integer $l^{}_{k}$ such that for every idempotent $P$ in
$\mathscr{L}(\mathscr{H})$ satisfying $\|P\|\leq k$, there exists an
invertible operator $X$ in $\mathscr{L}(\mathscr{H})$ satisfying
$\|X\|\leq l^{}_{k}$ and $\|X^{-1}\|\leq l^{}_{k}$ such that
$XPX^{-1}$ is the corresponding Jordan canonical form. The idea is
from considering the the following equality
$$\begin{pmatrix}
I&R\\
0&I\\
\end{pmatrix}
\begin{pmatrix}
I&R\\
0&0\\
\end{pmatrix}
\begin{pmatrix}
I&-R\\
0&I\\
\end{pmatrix}=
\begin{pmatrix}
I&0\\
0&0\\
\end{pmatrix},\quad\mbox{for}\quad
P=\begin{pmatrix}
I&R\\
0&0\\
\end{pmatrix}.\eqno(2.31)$$
Therefore, for a set as in (\cite{Azoff_1}, Corollary $3$)
$$\begin{array}{r}
\mathscr{S}^{}_{l^{}_{k}}=\{(S,J,Y)\in M^{}_{m}(\mathbb{C})\times
M^{}_{m}(\mathbb{C})\times
M^{}_{m}(\mathbb{C}): J\ {\mbox{is in Jordan form}},\quad\\
\|Y\|\leq l^{}_{k},\ \|Y^{-1}\|\leq l^{}_{k}{\mbox{ and }}
YSY^{-1}=J\},
\end{array}\eqno(2.32)$$
the set $\pi^{}_{1}(\mathscr{S}^{}_{l^{}_{k}})$ contains every
idempotent with norm less than $k$. By (\cite{Azoff_1}, Theorem
$1$), we obtain that the Borel map
$\phi^{}_{l^{}_{k}}:\pi^{}_{1}(\mathscr{S}^{}_{l^{}_{k}})\rightarrow\pi^{}_{3}(\mathscr{S}^{}_{l^{}_{k}})$
is bounded. Therefore the equivalent class of
$$\phi^{}_{l^{}_{\lceil\|{C^{}_{11;11}}\|\rceil}}\circ
{C^{}_{11;11}}(\cdot) \eqno{(2.33)}$$ is the invertible operator
$X^{}_{11;11}$ we need in $M^{}_{m^{}_{1}}(L^{\infty}(\mu^{}_{}))$.
In the same way, we obtain the invertible operators $X^{}_{11;22}$
and $X^{}_{11;33}$ for $C^{}_{11;22}$ and $C^{}_{11;33}$
respectively. Notice that the diagonal entries of $B^{}_{ii;st}$ are
the same for $1\leq i\leq 3$ and $1\leq s,t\leq m^{}_{i}$. Construct
an invertible operator $X$ in the commutant $\{UAU^{*}\}^{\prime}$
with $X^{}_{11;ii}$ for $1\leq i\leq 3$ such that
$XC^{\prime}X^{-1}$ is in the standard bounded maximal abelian set
of idempotents of $\{UAU^{*}\}^{\prime}$.
\end{proof}

\begin{lemma}
Let $\mathscr{P}$ be a bounded maximal abelian set of idempotents in
the commutant $\{A\}^{\prime}$, where $A$ is defined from $(1.13)$
to $(1.15)$. Then there exists a finite subset $\mathscr{P}^{}_{0}$
of $\mathscr{P}$ such that the equality
$$\mathscr{P}^{}_{0}(\lambda)=\mathscr{P}(\lambda)\eqno{(2.34)}$$
holds almost everywhere on $\sigma(N^{}_{\mu})$.
\end{lemma}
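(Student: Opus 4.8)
The plan is to use the maximality of $\mathscr{P}$ twice over: first to see that $\mathscr{P}$ is closed under ``cutting by Borel subsets of $\sigma(N^{}_{\mu})$'', which forces each fibre $\mathscr{P}(\lambda)$ to be a \emph{finite} Boolean algebra of commuting matrix idempotents; and then, once the (boundedly many) atoms of these Boolean algebras have been chosen as measurable fields, to integrate them back up into idempotents of $\{A\}^{\prime}$ which again lie in $\mathscr{P}$ by maximality. The finitely many subset-sums of those atoms will be the required $\mathscr{P}^{}_{0}$. For the structural input, put $N=m^{}_{1}n^{}_{1}+m^{}_{2}n^{}_{2}+m^{}_{3}n^{}_{3}$, so that $A$ acts on $(L^{2}(\mu))^{(N)}$. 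By (\cite{Shi_2}, Lemma $3.2$) together with Lemma $2.1$, the commutant $\{A\}^{\prime}$ is contained in $M^{}_{N}(L^{\infty}(\mu))$ acting as multiplication; hence every $P\in\mathscr{P}$ is decomposable, $P=\int^{\oplus}_{\sigma(N^{}_{\mu})}P(\lambda)\,d\mu(\lambda)$ with $P(\lambda)$ an idempotent in $\{A(\lambda)\}^{\prime}\subseteq M^{}_{N}(\mathbb{C})$, and $\|P(\lambda)\|\le\|P\|\le M$ for a.e.\ $\lambda$, where $M\triangleq\sup_{P\in\mathscr{P}}\|P\|<\infty$ because $\mathscr{P}$ is bounded. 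For a Borel set $\Delta\subseteq\sigma(N^{}_{\mu})$ the diagonal projection $E^{}_{\mu}(\Delta)^{(N)}$ lies in the centre of $\{A\}^{\prime}$; consequently, for $P,Q\in\mathscr{P}$ each of $I-P$, $PQ$, and $E^{}_{\mu}(\Delta)^{(N)}P+E^{}_{\mu}(\sigma(N^{}_{\mu})\backslash\Delta)^{(N)}Q$ is a bounded idempotent in $\{A\}^{\prime}$ commuting with every member of $\mathscr{P}$, hence lies in $\mathscr{P}$ by maximality. In particular $0,I\in\mathscr{P}$, and $\mathscr{P}$ is closed under complementation, products, and this patching operation.

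Next I would pass to a convenient countable subfamily $\mathscr{P}^{}_{*}\subseteq\mathscr{P}$: one that contains $0$ and $I$, is closed under $P\mapsto I-P$ and $(P,Q)\mapsto PQ$, and is weak-operator dense in $\mathscr{P}$ (possible since the norm-ball of radius $M$ is weak-operator metrizable and separable, and the indicated closure of a countable set stays countable). For a.e.\ $\lambda$ the fibre $\mathscr{P}^{}_{*}(\lambda)$ is then a set of pairwise commuting idempotents of $M^{}_{N}(\mathbb{C})$ containing $0$ and $I^{}_{N}$ and closed under product and under $p\mapsto I^{}_{N}-p$; thus it is a finite Boolean algebra, with mutually orthogonal atoms $q^{}_{1}(\lambda),\dots,q^{}_{m(\lambda)}(\lambda)$ summing to $I^{}_{N}$, spanning the commutative subalgebra $\mathbb{C}[\mathscr{P}^{}_{*}(\lambda)]\cong\mathbb{C}^{m(\lambda)}$ (so $m(\lambda)\le N$), and such that every element of $\mathscr{P}^{}_{*}(\lambda)$ is $\sum_{i\in I}q^{}_{i}(\lambda)$ for a unique $I$; moreover $\|q^{}_{i}(\lambda)\|\le M$, each atom being an element of $\mathscr{P}^{}_{*}(\lambda)$. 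Since $\mathscr{P}^{}_{*}$ is countable, $\lambda\mapsto\mathbb{C}[\mathscr{P}^{}_{*}(\lambda)]$ is a measurable field of finite-dimensional algebras; partitioning $\sigma(N^{}_{\mu})$ into the finitely many Borel pieces on which $m(\cdot)$ is constant and choosing on each a Borel ordering of the atoms (measurable selection in the style of (\cite{Azoff_1}, Theorem $1$)), I obtain bounded measurable fields $q^{}_{1}(\cdot),\dots,q^{}_{N}(\cdot)$ whose nonzero values at a.e.\ $\lambda$ are precisely the atoms of $\mathscr{P}^{}_{*}(\lambda)$.

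Now I would close the loop. The set of decomposable operators of norm $\le M$ whose fibres lie a.e.\ in the measurable field of subspaces $\mathbb{C}[\mathscr{P}^{}_{*}(\lambda)]$ is weak-operator closed and contains $\mathscr{P}^{}_{*}$, hence contains its weak-operator closure, hence all of $\mathscr{P}$; therefore, for every $P\in\mathscr{P}$, the idempotent $P(\lambda)$ lies in $\mathbb{C}[\mathscr{P}^{}_{*}(\lambda)]$ for a.e.\ $\lambda$, and being an idempotent of that algebra it lies in $\mathscr{P}^{}_{*}(\lambda)$; thus $\mathscr{P}(\lambda)=\mathscr{P}^{}_{*}(\lambda)$ a.e. Put $Q^{}_{i}\triangleq\int^{\oplus}q^{}_{i}(\lambda)\,d\mu(\lambda)$ for $1\le i\le N$. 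Each $Q^{}_{i}$ is a bounded idempotent in $\{A\}^{\prime}$ whose fibres lie in the commuting sets $\mathscr{P}(\lambda)$, so $Q^{}_{i}$ commutes with every member of $\mathscr{P}$ and hence $Q^{}_{i}\in\mathscr{P}$ by maximality; the same applies to each sum $\sum_{i\in I}Q^{}_{i}=\int^{\oplus}\sum_{i\in I}q^{}_{i}(\lambda)\,d\mu(\lambda)$. Setting
$$\mathscr{P}^{}_{0}\triangleq\Big\{\textstyle\sum_{i\in I}Q^{}_{i}\ :\ I\subseteq\{1,\dots,N\}\Big\}$$
gives a finite subset of $\mathscr{P}$ with $\mathscr{P}^{}_{0}(\lambda)=\{\sum_{i\in I}q^{}_{i}(\lambda):I\subseteq\{1,\dots,N\}\}$, which for a.e.\ $\lambda$ is exactly the set of all subset-sums of the atoms of $\mathscr{P}(\lambda)$, namely $\mathscr{P}(\lambda)$ itself; the reverse inclusion $\mathscr{P}^{}_{0}(\lambda)\subseteq\mathscr{P}(\lambda)$ is trivial. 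Hence $\mathscr{P}^{}_{0}(\lambda)=\mathscr{P}(\lambda)$ almost everywhere.

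The step I expect to be the main obstacle is the measure-theoretic one: showing that a countable weak-operator dense subfamily already exhausts the fibres, which is exactly where the weak-operator closedness of $\{T\ \text{decomposable}:T(\lambda)\in V^{}_{\lambda}\ \text{a.e.}\}$ for a measurable field $V^{}_{\lambda}$ of subspaces, combined with the patching closure of $\mathscr{P}$, does the work; and, alongside it, arranging the atoms of the fibres as Borel fields of uniformly bounded idempotents. The purely algebraic part — that each fibre is a finite Boolean algebra and that the integrated atoms and their sums fall back into $\mathscr{P}$ by maximality — is then routine.
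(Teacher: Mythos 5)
Your argument is essentially correct and reaches the stated conclusion, but it travels a genuinely different road from the paper's. You treat $\{A\}^{\prime}$ abstractly as a subalgebra of $M^{}_{N}(L^{\infty}(\mu))$ with $N=m^{}_{1}n^{}_{1}+m^{}_{2}n^{}_{2}+m^{}_{3}n^{}_{3}$, exploit the closure of $\mathscr{P}$ under complements, products and central patching (which the paper also uses, tacitly, in its ``Zorn lemma'' step), pass to a countable weak-operator dense Boolean subfamily $\mathscr{P}^{}_{*}$, observe that each fibre $\mathscr{P}^{}_{*}(\lambda)$ is a finite Boolean algebra with at most $N$ atoms, select the atom fields measurably, and reintegrate. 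The paper instead leans on Lemma $2.2$ (every idempotent of $\{A\}^{\prime}$ is similar inside $\{A\}^{\prime}$ to a standard block-diagonal one) to define the normalized fibrewise trace functions $\mbox{r}^{}_{i}(P)(\lambda)$ of $(2.36)$, and then runs an exhaustion argument --- repeatedly cutting by Borel sets and invoking maximality --- to manufacture $m^{}_{1}+m^{}_{2}+m^{}_{3}$ pairwise orthogonal idempotents in $\mathscr{P}$ whose fibres have normalized rank one in each block; $\mathscr{P}^{}_{0}$ is then the set of their $0$--$1$ combinations. The paper's route yields the sharp atom count $m^{}_{1}+m^{}_{2}+m^{}_{3}$, which is precisely what is reused in the proof of Theorem $1.2$ and in the $K^{}_{0}$ computation, whereas your route only bounds the number of atoms by $N$ --- enough for the lemma as stated, but not for its later applications without extra work. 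Your route is softer and more general (it would apply verbatim to any bounded maximal abelian set of idempotents in $M^{}_{N}(L^{\infty}(\mu))$), at the price of the two technical steps you only sketch: the Borel selection of the atom fields, and the weak-operator (equivalently, ultraweak on bounded sets) closedness of the set of decomposable $T$ with $T(\lambda)\in\mathbb{C}[\mathscr{P}^{}_{*}(\lambda)]$ almost everywhere, which is what lets the countable subfamily exhaust the fibres. Both are standard in reduction theory --- the second follows because the fibrewise Hilbert--Schmidt projections onto $\mathbb{C}[\mathscr{P}^{}_{*}(\lambda)]$ integrate to a normal $L^{\infty}(\mu)$-module map on $M^{}_{N}(L^{\infty}(\mu))$ --- but they carry real weight in your argument and should be written out.
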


\begin{proof}
The motivation of this lemma is to find a Borel measurable skeleton
of $\mathscr {P}$.

By Lemma $2.2$, for an idempotent $P$ in $\mathscr{P}$, there exists
a unitary operator $U$ such that the operator $C=UPU^{*}$ is in the
form of $(2.22)$, and $C$ is similar to $C^{\prime}$ in
$\{UAU^{*}\}^{\prime}$, where $C^{\prime}$ is in the form of
$(2.28)$.

Let $E^{}_{i}$ be a projection in $\{UAU^{*}\}^{\prime}$, which is
as in the form of $(2.28)$
$$E^{}_{i}=\begin{pmatrix}
E^{}_{i;1}&0&\cdots&0\\
0&E^{}_{i;2}&\cdots&0\\
\vdots&\vdots&\ddots&\vdots\\
0&0&\cdots&E^{}_{i;n^{}_{1}}\\
\end{pmatrix}\mbox{  for } i=1,2,3, \eqno(2.35)$$
where, as in the form of $(2.27)$ we write $E^{}_{i;1}$ as a
$3$-by-$3$ block matrix, the $(i,i)$ block of $E^{}_{i;1}$ is the
identity of $M^{}_{m^{}_{i}}(L^{\infty}(\mu^{}_{}))$ and other
blocks are $0$, compared with $C^{\prime}_{11}$ in $(2.27)$. Thus
the projections $E^{}_{i;2},\ldots,E^{}_{i;n^{}_{1}}$ can be fixed
corresponding to $E^{}_{i;1}$. Therefore we have the equality
${E^{}_{i}}{C^{}_{}}{E^{}_{i}}=
{E^{}_{i}}{C^{\prime}_{}}{E^{}_{i}}$. Define a
$\mu^{}_{}$-measurable function $\mbox{r}^{}_{i}$ in the form
$$\mbox{r}^{}_{i}(P)(\lambda)\triangleq\dfrac{1}{n^{}_{i}}
\mbox{Tr}^{}_{{n^{}_{i}m^{}_{i}}}({E^{}_{i}}{UP^{}_{}U^{*}_{}}{E^{}_{i}}(\lambda)),
\mbox{ for almost every $\lambda$ in
$\sigma(N^{}_{\mu})$},\eqno(2.36)$$ where
$\mbox{Tr}^{}_{{n^{}_{i}m^{}_{i}}}$ denotes the standard trace on
$M^{}_{{n^{}_{i}m^{}_{i}}}(\mathbb {C})$.

We assert that there exists an idempotent $P$ in $\mathscr{P}$ such
that the inequality
$$0<\mbox{r}^{}_{1}(P)(\lambda)<m^{}_{1} \eqno(2.37)$$ holds almost
everywhere on $\sigma(N^{}_{\mu})$.

If $\mbox{r}^{}_{1}(P)(\lambda)=0$ or
$\mbox{r}^{}_{1}(P)(\lambda)=m^{}_{1}$ holds almost everywhere on
$\sigma(N^{}_{\mu})$ for every $P$ in $\mathscr{P}$, then
$\mathscr{P}$ is not bounded maximal abelian. Therefore, there
exists a subset $\Gamma^{}_{1}$ of $\sigma(N^{}_{\mu})$ with
$\mu^{}_{}(\Gamma^{}_{1})>0$ and an idempotent $P^{}_{1}$ in
$\mathscr{P}$ such that
$0<\mbox{r}^{}_{1}(P^{}_{1})(\lambda)<m^{}_{1}$ holds almost
everywhere on $\Gamma^{}_{1}$. In the same way, we have a subset
$\Gamma^{}_{2}$ of $\sigma(N^{}_{\mu})\backslash \Gamma^{}_{1}$ with
$\mu^{}_{}(\Gamma^{}_{2})>0$ and an idempotent $P^{}_{2}$ in
$\mathscr{P}$ such that
$0<\mbox{r}^{}_{1}(P^{}_{2})(\lambda)<m^{}_{1}$ holds almost
everywhere on $\Gamma^{}_{2}$. By Zorn lemma, there are sequences
$\{P^{}_{i}\}^{\infty}_{i=1}$ in $\mathscr{P}$ and
$\{\Gamma^{}_{i}\}^{\infty}_{i=1}$ with $\mu^{}_{}(\Gamma^{}_{i})>0$
for every $i$ and $\cup^{\infty}_{i=1}
{(\Gamma^{}_{i})}=\sigma(N^{}_{\mu})$ such that
$0<\mbox{r}^{}_{1}(P^{}_{i})(\lambda)<m^{}_{1}$ holds almost
everywhere on $\Gamma^{}_{i}$. Denote by $P$ the sum of the
restrictions of $P^{}_{i}$ on $\Gamma^{}_{i}$. Therefore, we obtain
the above assertion.

Next, we assert that there exists an idempotent $P$ in $\mathscr{P}$
such that the equality
$$\mbox{r}^{}_{1}(P)(\lambda)=1 \eqno(2.38)$$ holds almost
everywhere on $\sigma(N^{}_{\mu})$.

If $P$ is described as in the fist assertion, then
$\sigma(N^{}_{\mu})$ can be divided into at most $m^{}_{1}-1$
pairwise disjoint Borel subsets
$\{\Gamma^{}_{i}\}^{m^{}_{1}-1}_{i=1}$ corresponding to
$\mbox{r}^{}_{1}(P^{}_{})$ such that the equality
$\mbox{r}^{}_{1}(P)(\lambda)=i$ holds almost everywhere on
$\Gamma^{}_{i}$. Assume that $\mu^{}_{}(\Gamma^{}_{m^{}_{1}-1})>0$.
By a similar proof of the first assertion, there exists an
idempotent $P^{}_{1}$ in $\mathscr{P}$ such that the inequality
$0<\mbox{r}^{}_{1}(P^{}_{1})(\lambda)<m^{}_{1}-1$ holds almost
everywhere on $\Gamma^{}_{m^{}_{1}-1}$. Let $Q^{}_{1}$ denote the
sum of the restriction of $P^{}_{1}$ on $\Gamma^{}_{m^{}_{1}-1}$ and
the restriction of $P_{}$ on $\sigma(N^{}_{\mu})\backslash
\Gamma^{}_{m^{}_{1}-1}$. Redivide $\sigma(N^{}_{\mu})$ into at most
$m^{}_{1}-2$ pairwise disjoint Borel subsets
$\{\Gamma^{}_{i}\}^{m^{}_{1}-2}_{i=1}$ corresponding to
$\mbox{r}^{}_{}(Q^{}_{1})$ as above. Assume that
$\mu^{}_{}(\Gamma^{}_{m^{}_{1}-2})>0$. There exists an idempotent
$P^{}_{2}$ in $\mathscr{P}$ such that the inequality
$0<\mbox{r}^{}_{1}(P^{}_{2})(\lambda)<m^{}_{1}-2$ holds almost
everywhere on $\Gamma^{}_{m^{}_{1}-2}$. Construct $Q^{}_{2}$ with
$P^{}_{2}$ and $Q^{}_{1}$ as above. After at most ${m^{}_{1}-2}$
steps, we obtain an idempotent in $\mathscr{P}$ as required in the
second assertion.

Finally, we assert that there are $m^{}_{1}$ idempotents
$\{P^{}_{i}\}^{m^{}_{1}}_{i=1}$ in $\mathscr{P}$ such that the
equality
$$\mbox{r}^{}_{1}(P^{}_{i})(\lambda)=1 \eqno(2.39)$$ holds almost
everywhere on $\sigma(N^{}_{\mu})$, and $P^{}_{i}P^{}_{j}=0$ for
$i\neq j$.

By the second assertion, we obtain $P^{}_{1}$ in $\mathscr{P}$ such
that $\mbox{r}^{}_{1}(P^{}_{1})(\lambda)=1$ holds almost everywhere
on $\sigma(N^{}_{\mu})$. Then we obtain $P^{}_{2}$ in
$(I-P^{}_{1})\mathscr{P}$ such that
$\mbox{r}^{}_{1}(P^{}_{2})(\lambda)=1$ holds almost everywhere on
$\sigma(N^{}_{\mu})$ by applying the first two assertions. Take
these idempotents one by one and we prove the third assertion.

By the above three assertions, we obtain
$m^{}_{1}+m^{}_{2}+m^{}_{3}$ idempotents
$\{P^{}_{j;i}\}^{3;m^{}_{i}}_{i=1;j=1}$ in $\mathscr{P}$ such that
the equality
$$\mbox{r}^{}_{i}(P^{}_{j;i})(\lambda)=1 \eqno(2.40)$$ holds almost
everywhere on $\sigma(N^{}_{\mu})$, and $(P^{}_{j;i})
(P^{}_{l;k})=0$ for $i\neq k$ or $j\neq l$. Construct
$\mathscr{P}^{}_{0}$ in the form
$$\mathscr{P}^{}_{0}\triangleq\{\sum^{3}_{i=1}\sum^{m^{}_{i}}_{j=1}
\alpha^{}_{ij}
(P^{}_{j;i}):\alpha^{}_{ij}\in\{0,1\}\}.\eqno{(2.41)}$$ Then the
equality $\mathscr{P}^{}_{0}(\lambda)=\mathscr{P}(\lambda)$ holds
almost everywhere on $\sigma(N^{}_{\mu})$.
\end{proof}

\begin{proof}[Proof of Theorem $1.2$]
Let $\mathscr{P}$ be a bounded maximal abelian set of idempotents in
$\{A\}^{\prime}$. By Lemma $2.3$, there exist
$m^{}_{1}+m^{}_{2}+m^{}_{3}$ idempotents
$\{P^{}_{j;i}\}^{3;m^{}_{i}}_{i=1;j=1}$ in $\mathscr{P}$ such that
the equality $\mbox{r}^{}_{i}(P^{}_{j;i})(\lambda)=1$ holds almost
everywhere on $\sigma(N^{}_{\mu})$, and $P^{}_{j;i}P^{}_{l;k}=0$ for
$i\neq k$ or $j\neq l$. By Lemma $2.2$, there exists an invertible
operator $X^{}_{1;1}$ in $\{A\}^{\prime}$ such that
$X^{}_{1;1}P^{}_{1;1}X^{-1}_{1;1}$ is in the standard bounded
maximal abelian set of idempotents $\mathscr {E}$ in
$\{A\}^{\prime}$. Precisely, the idempotent
$X^{}_{1;1}P^{}_{1;1}X^{-1}_{1;1}$ is in the form
$$X^{}_{1;1}P^{}_{1;1}X^{-1}_{1;1}= (I\oplus
0^{(m^{}_{1}-1)}_{})\oplus(0^{(m^{}_{2})}_{})\oplus(0^{(m^{}_{3})}_{}),
\eqno(2.42)$$ where $I$ is the identity operator in
$M^{}_{n^{}_{1}}(L^{\infty}(\mu^{}_{}))$. In a similar way, there
exists an invertible operator $X^{}_{2;1}$ in $\{A\}^{\prime}$ such
that $(X^{}_{2;1}X^{}_{1;1})P^{}_{1;1}(X^{}_{2;1}X^{}_{1;1})^{-1}$
and $(X^{}_{2;1}X^{}_{1;1})P^{}_{2;1}(X^{}_{2;1}X^{}_{1;1})^{-1}$
are both in the standard bounded maximal abelian set of idempotents
in $\{A\}^{\prime}$. The invertible operator $X^{}_{2;1}$ is in the
form
$$X^{}_{2;1}=
\begin{pmatrix}
I&0\\
0&*\\
\end{pmatrix}, \eqno(2.43)$$ where $I$ is the identity operator in
$M^{}_{n^{}_{1}}(L^{\infty}(\mu^{}_{}))$. Furthermore, there exist
$m^{}_{1}+m^{}_{2}+m^{}_{3}-3$ invertible operators
$\{X^{}_{j;i}\}^{3;m^{}_{i}-1}_{i=1;j=1}$ in $\{A\}^{\prime}$ such
that $X(P^{}_{j;i})X^{-1}$  is in the standard bounded maximal
abelian set of idempotents in $\{A\}^{\prime}$ for every $i$ and
$j$, where let $X$ denote the product
$$X=X^{}_{m^{}_{3}-1;3}\cdots X^{}_{1;3} X^{}_{m^{}_{2}-1;2}\cdots
X^{}_{1;2} X^{}_{m^{}_{1}-1;1}\cdots X^{}_{1;1}. \eqno(2.44)$$ Then
we obtain that the set $X\mathscr{P}X^{-1}$ is the standard bounded
maximal abelian set of idempotents in the commutant
$\{A\}^{\prime}$. Therefore, the strongly irreducible decomposition
of $A$ is unique up to similarity.

Next, we compute the $K^{}_{}$ groups of $\{A\}^{\prime}$. We denote
by $\mathscr {J}$ a closed two-sided ideal of $\{A\}^{\prime}$ such
that for every operator $B$ in $\mathscr {J}$, every entry in the
main diagonal of $B^{}_{ii;st}$ is $0$ for $1\leq i\leq 3$ and
$1\leq s,t\leq m^{}_{i}$, where $B$ and $B^{}_{ii;st}$ are as in the
form of $(2.20)$ and $(2.21)$. By $\mathscr {B}$ we denote a
subalgebra of $\{A\}^{\prime}$ such that for every operator $B$ in
$\mathscr {B}$, every entry of $B^{}_{ij;st}$ is $0$ except ones in
the main diagonal of $B^{}_{ii;st}$, for $1\leq i,j\leq 3$ and
$1\leq s,t\leq m^{}_{i}$. By observation, we obtain the following
split short exact sequence:
$$\xymatrix@C=0.5cm{
0\ar[r] &\mathscr {J}\ar[r]^-{\iota} &\{A\}^{\prime}\ar@<0.5ex>[r]
^-{\pi}&\mathscr {B}\ar@<0.5ex>[l]^-{\alpha}\ar[r]&0}
\eqno{(2.45)}$$ where we denote by $\iota$ and $\alpha$ the
inclusion maps and by $\pi$ the map such that for every operator $B$
in $\{A\}^{\prime}$, every entry of $\pi(B)^{}_{ij;st}$ is $0$
except ones in the main diagonal of $B^{}_{ii;st}$ staying invariant
with respect to $\pi$, for $1\leq i,j\leq 3$ and $1\leq s,t\leq
m^{}_{i}$. Essentially, $\pi$ is the quotient map. Furthermore, we
obtain
$$\mathscr {B}\cong M^{}_{m^{}_{1}}(L^{\infty}(\mu))
\oplus M^{}_{m^{}_{2}}(L^{\infty}(\mu)) \oplus
M^{}_{m^{}_{3}}(L^{\infty}(\mu)). \eqno{(2.46)}$$

By Lemma $2.2$, we have $K^{}_{0}(\pi)$ is an isomorphism.
Therefore, $$K^{}_{0}(\{A\}^{\prime})\cong K^{}_{0}(\mathscr {B})
\eqno{(2.47)}$$ and by a routine computation, we obtain
$$K^{}_{0}(\{A\}^{\prime})\cong
\{f:\sigma(N^{}_{\mu})\rightarrow\mathbb{Z}^{(3)}_{}, f\mbox{ is
bounded Borel}\}. \eqno(2.48)$$

For a generalized case, we need to combine the proofs as above with
respect to different regular Borel measures which are pairwise
mutually singular. Since the spectrum of $A^{}_{nm}$ (as in $(1.10)$
and $(1.11)$) equals $\sigma(N^{}_{\nu^{}_{nm}})$, we construct a
normal operator
$$N=\bigoplus^{\infty}_{n=1}\bigoplus^{\infty}_{m=1}N^{}_{\nu^{}_{nm}}, \eqno(2.49)$$
where $N^{}_{\nu^{}_{nm}}=0$ holds for all but finitely many $n$ and
$m$ in $\mathbb{N}$, corresponding to the assumption from $(1.10)$
to $(1.12)$. We observe that for $i\neq j$, the scalar-valued
spectral measures $\nu^{}_{nm^{}_{i}}$ and $\nu^{}_{nm^{}_{j}}$ are
mutually singular, but the scalar-valued spectral measures
$\nu^{}_{n^{}_{i}m}$ and $\nu^{}_{n^{}_{j}m}$ may not be mutually
singular. By (\cite{Conway_1}, IX, Theorem $10.16$), the normal
operator $N$ can be expressed in a direct sum of finitely many
normal operators with pairwise mutually singular scalar-valued
spectral measures. Actually, this is a finer decomposition than the
one in $(2.49)$. With this expression and the above proof for a
special case, we obtain the proof of Theorem $1.2$ and Theorem
$1.3$.
\end{proof}

By Theorem $1.2$, we can compute the $K^{}_{0}$ group of
$\{A\}^{\prime}$, if the strongly irreducible decomposition of $A$
is unique up to similarity. Next, we investigate the uniqueness of
the strongly irreducible decomposition of $A$ up to similarity by
the $K^{}_{0}$ group of $\{A\}^{\prime}$. Let operators $A$ and
$B^{}_{}$ be as in the form of $(1.14)$ and $(1.15)$:
$$A^{}_{}=\left(\begin{array}{cccccc}
N^{}_{\mu^{}_{}}&M^{}_{f^{}_{12}}&\cdots&M^{}_{f^{}_{1n}}\\
0&N^{}_{\mu^{}_{}}&\cdots&M^{}_{f^{}_{2n}}\\
\vdots&\vdots&\ddots&\vdots\\
0&0&\cdots&N^{}_{\mu^{}_{}}\\
\end{array}\right)^{}_{n\times n} \eqno{(2.50)}$$ and
$$B^{}_{}=\left(\begin{array}{cccccc}
N^{}_{\mu^{}_{}}&M^{}_{g^{}_{12}}&\cdots&M^{}_{g^{}_{1n}}\\
0&N^{}_{\mu^{}_{}}&\cdots&M^{}_{g^{}_{2n}}\\
\vdots&\vdots&\ddots&\vdots\\
0&0&\cdots&N^{}_{\mu^{}_{}}\\
\end{array}\right)^{}_{n\times n}, \eqno(2.51)$$ where
$M^{}_{f^{}_{ij}}$ and $M^{}_{g^{}_{ij}}$ is in
$\{N^{(n)}_{\mu^{}_{}}\}^{\prime}$, for $1\leq i<j\leq n$. Then we
have the following lemma.

\begin{lemma}
The operators $A^{(m^{}_{1})}_{}$ and $B^{(m^{}_{2})}_{}$ are
similar in $M^{}_{nm^{}_{1}}(L^{\infty}(\mu^{}_{}))$ $(m^{}_{1}\geq
m^{}_{2})$ if and only if there exists an isomorphism $\theta$ such
that
$$\left.\begin{array}{ll}
(1)&K^{}_{0}(\{T\}^{\prime})\cong\{f:\sigma(N^{}_{\mu})\rightarrow\mathbb{Z}^{}_{},f
\mbox{ is bounded Borel}\}, \mbox{ and}\\
(2)&\theta([I^{}_{\{T\}^{\prime}}])=2m^{}_{1}e^{}_{},
\end{array}\right\}\eqno{(2.52)}$$
 where $T=A^{(m^{}_{1})}_{}\oplus B^{(m^{}_{2})}_{}$ and
$e(\lambda)$ is the generator of the semigroup $\mathbb {N}$ of
$\mathbb {Z}$ for almost every $\lambda$ in $\sigma(N^{}_{\mu})$.
\end{lemma}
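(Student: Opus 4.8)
My plan is to prove the two implications separately: the ``only if'' direction follows quickly from Theorem $1.2$, while the ``if'' direction requires a structural description of $K_0(\{T\}')$, with $T=A^{(m_1)}\oplus B^{(m_2)}$, obtained by adapting Lemmas $2.1$ and $2.2$ to $T$.

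\emph{The ``only if'' direction.} Suppose $A^{(m_1)}$ and $B^{(m_2)}$ are similar. Comparing the ranks of the Hilbert spaces on which they act forces $nm_1=nm_2$, hence $m_1=m_2$; write $m:=m_1=m_2$ and let $W$ be invertible with $WA^{(m)}W^{-1}=B^{(m)}$. Then $I\oplus W$ conjugates $T=A^{(m)}\oplus B^{(m)}$ onto $A^{(m)}\oplus A^{(m)}=A^{(2m)}$, so $\{T\}'$ is isomorphic, as a unital algebra, to $\{A^{(2m)}\}'$. Since $A^{(2m)}$ is of the form $(1.14)$--$(1.15)$ with a single block size $n$ and multiplicity $2m$, Theorem $1.2$ (its proof specialised to one summand, via the split exact sequence $(2.45)$) gives $K_0(\{A^{(2m)}\}')\cong\{f:\sigma(N_\mu)\to\mathbb{Z}:f\text{ is bounded Borel}\}$, the isomorphism being induced by the quotient map onto $\mathscr{B}\cong M_{2m}(L^\infty(\mu))$, under which the class of the identity is carried to $[I_{M_{2m}(L^\infty(\mu))}]=2m\,e$. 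Transporting this isomorphism back along $I\oplus W$ yields a $\theta$ satisfying $(1)$ and, because $m=m_1$, also $(2)$.

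\emph{The ``if'' direction.} Assume $\theta$ satisfies $(1)$ and $(2)$. It suffices to deduce that (a) $A$ and $B$ are similar as operators and (b) $m_1=m_2$, for then applying a similarity $A\to B$ coordinatewise gives $A^{(m_1)}\sim B^{(m_1)}=B^{(m_2)}$. Both facts are to be read off from $K_0(\{T\}')$. Write $\{T\}'$ as a $2\times2$ operator matrix with diagonal corners $M_{m_1}(\{A\}')$ and $M_{m_2}(\{B\}')$ and off-diagonal entries built from the intertwiner spaces $\{X:AX=XB\}$ and $\{Y:BY=YA\}$. Analysing these intertwiner spaces exactly as in Lemma $2.1$, but now for two blocks of the same size $n$ whose superdiagonal multipliers need not be invertible, should show that every idempotent of $\{T\}'$ is similar, within $\{T\}'$, to a block-diagonal standard idempotent, as in Lemma $2.2$; consequently $K_0(\{T\}')\cong K_0(\mathscr{B}_T)$, where $\mathscr{B}_T$ is the diagonal algebra of $\{T\}'$. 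The key point is the dichotomy: $\mathscr{B}_T\cong M_{m_1+m_2}(L^\infty(\mu))$ when $A$ and $B$ are similar almost everywhere on $\sigma(N_\mu)$, whereas on any Borel set of positive measure on which they are \emph{not} similar $\mathscr{B}_T$ acquires a second $L^\infty$-summand, so that $K_0(\{T\}')$ carries a $\mathbb{Z}^{(2)}$-valued piece there and $[I_{\{T\}'}]=[I_A]+[I_B]$ restricts to $(m_1,m_2)$ on it. Since a group isomorphism preserves the largest integer dividing a given element, and $[I_A]+[I_B]$ then has divisibility at most $\gcd(m_1,m_2)\le m_1<2m_1$, condition $(2)$, which asks $\theta([I_{\{T\}'}])=2m_1e$, is impossible unless the non-similar locus is null; this gives (a). With (a) in hand, $[I_{\{T\}'}]=(m_1+m_2)e$, and $(2)$ forces $m_1+m_2=2m_1$, i.e., (b).

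\emph{The main obstacle.} The crux is the structural analysis of $\{T\}'$ when $A$ and $B$ are \emph{not} assumed similar: because the superdiagonal multiplication operators $M_{f_{i,i+1}}$ and $M_{g_{i,i+1}}$ need not be invertible one cannot simply rescale, so the ``comparability locus'' of $A$ and $B$ --- the Borel set where the relevant ratios of superdiagonal data are bounded with bounded inverse --- must be isolated and the intertwiner spaces and idempotents analysed separately over it and over its complement. Establishing the dichotomy ``$\mathscr{B}_T\cong M_{m_1+m_2}(L^\infty(\mu))$ if and only if $A\sim B$ a.e.'' (equivalently, that a minimal idempotent of $\{A^{(m_1)}\}'$ and one of $\{B^{(m_2)}\}'$ become $K_0$-equivalent in $\{T\}'$ exactly on the comparability locus) is the real work; once it is available the $K$-theoretic bookkeeping and the divisibility argument above are routine.
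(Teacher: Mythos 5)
Your ``if'' direction has a genuine gap, and it sits exactly where you flag ``the real work.'' Your argument hinges on the dichotomy that $\mathscr{B}^{}_{T}\cong M^{}_{m^{}_{1}+m^{}_{2}}(L^{\infty}(\mu))$ when $A$ and $B$ are similar almost everywhere on $\sigma(N^{}_{\mu})$, and that otherwise a second summand appears which you then kill with the divisibility of $\theta([I^{}_{\{T\}^{\prime}}])$. As stated, this dichotomy is keyed to the wrong notion: the paper's own Appendix example $(3.6)$--$(3.7)$ exhibits $X,Y$ of the form $(1.14)$ with $X(\lambda)$ similar to $Y(\lambda)$ for almost every $\lambda$ (every $\lambda\neq 0$), yet $X$ and $Y$ are \emph{not} similar and the idempotents $I\oplus 0$ and $0\oplus I$ are not similar in $\{X\oplus Y\}^{\prime}$. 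So ``similar a.e.'' does not yield the single-summand structure; what matters is the existence of a \emph{boundedly invertible} intertwiner, and isolating the ``comparability locus'' as a Borel set and proving that $K^{}_{0}$ sees its complement is precisely the unestablished step on which your whole deduction of (a) rests. A separate, smaller defect: in the ``only if'' direction, ``comparing the ranks of the Hilbert spaces'' does not force $m^{}_{1}=m^{}_{2}$, since $(L^{2}(\mu))^{(nm^{}_{1})}$ and $(L^{2}(\mu))^{(nm^{}_{2})}$ are isomorphic Hilbert spaces for any $m^{}_{1},m^{}_{2}$; one needs the multiplicity of the normal part or the class of the identity in $K^{}_{0}$.

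The paper's proof of the converse avoids the structural computation of $\{T\}^{\prime}$ in the non-similar case altogether. It takes idempotents $P$ and $Q$ in $\{A^{(m^{}_{1})}\}^{\prime}$ and $\{B^{(m^{}_{2})}\}^{\prime}$ with $\mbox{r}(P)(\lambda)=\mbox{r}(Q)(\lambda)=1$ a.e., and argues that if $P\oplus 0$ and $0\oplus Q$ were not similar in $\{T\}^{\prime}$ then both classes would have to map to the generator $e$, contradicting that $\theta$ is an isomorphism onto $\{f:\sigma(N^{}_{\mu})\rightarrow\mathbb{Z}\}$; hence they are similar, hence $A\oplus 0$ is similar to $0\oplus B$ in $\{T\}^{\prime}$ (by restricting $T$ to the ranges of suitable similar projections), hence $A\sim B$, and then hypothesis $(2)$ gives $m^{}_{1}+m^{}_{2}=2m^{}_{1}$. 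In other words, hypothesis $(1)$ is used \emph{directly} to force the similarity of the minimal idempotents, rather than to rule out a hypothetical second summand by a divisibility count. If you want to salvage your route, you would need to actually construct the comparability locus and prove your dichotomy for it; as written, the proposal does not constitute a proof.
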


\begin{proof}
If the operators $A^{(m^{}_{1})}_{}$ and $B^{(m^{}_{2})}_{}$ are
similar in $M^{}_{nm^{}_{1}}(L^{\infty}(\mu^{}_{}))$, then we obtain
$K^{}_{0}(\{T\}^{\prime}_{})$ as required by the proof of Theorem
$1.2$.

On the other hand, we suppose that the relations in $(2.52)$ hold.
Let $P$ and $Q$ be idempotents in
$\{A^{(m^{}_{1})}_{}\}^{\prime}_{}$ and
$\{B^{(m^{}_{2})}_{}\}^{\prime}_{}$ respectively such that the
equalities
$$\mbox{r}^{}_{\{A^{(m^{}_{1})}_{}\}^{\prime}_{}}(P)(\lambda)=1\quad
\mbox{ and }\quad
\mbox{r}^{}_{\{B^{(m^{}_{2})}_{}\}^{\prime}_{}}(Q)(\lambda)=1
\eqno{(2.53)}$$ hold for almost every $\lambda$ in
$\sigma(N^{}_{\mu})$. If $P\oplus 0$ and $0\oplus Q$ are not similar
in $\{T\}^{\prime}_{}$, then we obtain $\theta([P\oplus
0])=e^{}_{}=\theta([0\oplus Q])$. Thus $\theta$ is not an
isomorphism which contradicts the assumption in $(2.52)$. Therefore
$P\oplus 0$ and $0\oplus Q$ are similar in $\{T\}^{\prime}_{}$. We
can choose projections $E$ and $F$ similar to $P\oplus 0$ in
$\{T\}^{\prime}_{}$ such that $T|^{}_{{\mathrm{ran}}E}=A^{}_{}$ and
$T|^{}_{{\mathrm{ran}}F}=B^{}_{}$. Thus $A^{}_{}\oplus 0$ is similar
to $0\oplus B^{}_{}$ in $\{T\}^{\prime}_{}$. The equality
$\theta([I^{}_{\{T\}^{\prime}}])=2m^{}_{1}e^{}_{1}$ yields that
$m^{}_{1}+m^{}_{2}=2m^{}_{1}$. Hence $m^{}_{1}=m^{}_{2}$ and
$A^{(m^{}_{1})}_{}$ is similar to $B^{(m^{}_{2})}_{}$.
\end{proof}

\begin{proof}[Proof of Theorem $1.4$]
If the operator $A=\oplus^{3}_{i=1} A^{(m^{}_{i})}_{n^{}_{i}}$ is
similar to $B=\oplus^{3}_{j=1} B^{(k^{}_{j})}_{l^{}_{j}}$, then we
can obtain an isomorphism $\theta$ and the $K^{}_{0}$ group
$K^{}_{0}(\{T\}^{\prime})$ as required in the theorem by a routine
computation.

To show the converse, suppose that there exists an isomorphism
$\theta$ such that
\begin{enumerate}
\item [(a)] $\theta:K^{}_{0}(\{T\}^{\prime})\rightarrow
\{f:\sigma(N^{}_{\mu})\rightarrow\mathbb {Z}^{(3)}_{}, f\mbox{ is
bounded Borel}\}$ and
\item [(b)] $\theta([I^{}_{\{T\}^{\prime}}])=
2m^{}_{1}e^{}_{1}+2m^{}_{2}e^{}_{2}+2m^{}_{3}e^{}_{3}$.
\end{enumerate}
In the commutant $\{T\}^{\prime}$, there exist $3$ projections
$\{E^{}_{i}\}^{3}_{i=1}$ and $3$ projections
$\{F^{}_{j}\}^{3}_{j=1}$ such that
\begin{enumerate}
\item [(1)] $T|^{}_{\mathrm{ran} E^{}_{i}}=A^{}_{n^{}_{i}}$
and $T|^{}_{\mathrm{ran} F^{}_{j}}=B^{}_{l^{}_{j}}$;
\item [(2)] $E^{}_{i}E^{}_{j}=F^{}_{i}F^{}_{j}=0$ and
$E^{}_{i}F^{}_{j}=0$ for $i\neq j$;
\item [(3)] the equalities
$\mbox{r}^{}_{i}(E^{}_{i})(\lambda)=1$ and
$\mbox{r}^{}_{j}(F^{}_{j})(\lambda)=1$ hold for almost every
$\lambda$ in $\sigma(N^{}_{\mu})$ and $1\leq i,j\leq 3$.
\end{enumerate}
The equivalence classes $\{[E^{}_{i}]\}^{3}_{i=1}$ can be considered
as the generating set of $K^{}_{0}(\{T\}^{\prime})$. If $F^{}_{i}$
is not similar to $E^{}_{i}$ in $\{T\}^{\prime}$ for some $i$, then
for $K^{}_{0}(\{T\}^{\prime})$, there exists a $\lambda$ in the
$\sigma(N^{}_{\mu})$ such that the set
$\{E^{}_{j}(\lambda)\}^{3}_{j=1}\cup\{F^{}_{i}(\lambda)\}$ generates
$\mathbb {Z}^{(4)}$, which is a contradiction since $\lambda$ can
not be removed from $\sigma(N^{}_{\mu})$. Therefore, $F^{}_{i}$ is
similar to $E^{}_{i}$ in $\{T\}^{\prime}$ for $1\leq i\leq 3$. The
coefficient of $e^{}_{i}$ in $\theta([I^{}_{\{T\}^{\prime}}])$ is
$m^{}_{i}+k^{}_{i}=2m^{}_{i}$ for $1\leq i\leq 3$. Therefore the
equality $m^{}_{i}=k^{}_{i}$ holds for $1\leq i\leq 3$. Thus we
obtain that the operator $A$ is similar to $B$.
\end{proof}

\section{Appendix}
In this part, we show the relation between an operator $A$ as in
$(1.16)$ and $B^{(m)}_{}$ such that $B$ is as in the form of
$(2.51)$ and $m$ is a positive integer. The motivation is to obtain
a decomposition of an operator $A$ as in $(1.16)$ with respect to
the main diagonal entries. Suppose that $A$ is an operator in the
form
$$A=\begin{pmatrix}
M^{}_{f^{}_{}}&M^{}_{f^{}_{12}}&\cdots&M^{}_{f^{}_{1n}}\\
0&M^{}_{f^{}_{}}&\cdots&M^{}_{f^{}_{2n}}\\
\vdots&\vdots&\ddots&\vdots\\
0&0&\cdots&M^{}_{f^{}_{}}\\
\end{pmatrix}^{}_{n\times n}
\begin{matrix}
L^{2}(\mu)\\
L^{2}(\mu)\\
\vdots\\
L^{2}(\mu)\\
\end{matrix}, \eqno{(3.1)}$$
and there exists a unitary operator $V$ such that
$VM^{}_{f}V^{*}=N^{(m)}_{\nu^{}_{}}$ where $f$ and $f^{}_{ij}$ are
in $L^{\infty}(\mu^{}_{})$ and $\nu^{}_{}=\mu^{}_{}\circ f^{-1}_{}$.
Then we have the following proposition.
\begin{proposition}
There is a unitary operator $W$ such that
$WM^{}_{f}W^{*}=N^{(m)}_{\nu^{}_{}}$ and
$$W^{(n)}_{}A(W^{*}_{})^{(n)}_{}=\bigoplus^{m}_{k=1}\begin{pmatrix}
N^{}_{\nu^{}_{}}&M^{}_{f^{}_{k;12}}&M^{}_{f^{}_{k;13}}&\cdots&M^{}_{f^{}_{k;1n}}\\
0&N^{}_{\nu^{}_{}}&M^{}_{f^{}_{k;23}}&\cdots&M^{}_{f^{}_{k;2n}}\\
0&0&N^{}_{\nu^{}_{}}&\cdots&M^{}_{f^{}_{k;3n}}\\
\vdots&\vdots&\vdots&\ddots&\vdots\\
0&0&0&\cdots&N^{}_{\nu^{}_{}}\\
\end{pmatrix}^{}_{n\times n}. \eqno(3.2)$$
\end{proposition}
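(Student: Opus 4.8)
The plan is to reduce the statement to a measurable simultaneous diagonalization of the multiplication operators occurring as the entries of $A$. Let $\mathscr{A}\subseteq\mathscr{L}(L^2(\mu))$ be the von Neumann algebra generated by the entries of $A$. Since all diagonal entries equal $M_f$ and the entries are mutually commuting normal (multiplication) operators, $\mathscr{A}$ is abelian, and being an abelian von Neumann algebra acting on the separable space $L^2(\mu)$ it is singly generated; write $\mathscr{A}=\{M_h\}^{\prime\prime}$ with $h$ real-valued in $L^\infty(\mu)$, so that each entry is a bounded Borel function of $M_h$, say $M_f=\psi(M_h)$ and $M_{f_{ij}}=\psi_{ij}(M_h)$. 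Under the given unitary $V$ (so $VM_fV^*=N^{(m)}_\nu$), the operator $T\triangleq VM_hV^*$ is self-adjoint and commutes with $N^{(m)}_\nu$; since $N_\nu$ is star-cyclic we have $\{N_\nu\}^{\prime}=L^\infty(\nu)$, hence (by the Fuglede--Putnam theorem) $\{N^{(m)}_\nu\}^{\prime}=M_m(L^\infty(\nu))$, so $T$ is a self-adjoint decomposable operator $T=\int^{\oplus}_{\sigma(N_\nu)}T(\lambda)\,d\nu(\lambda)$ with each $T(\lambda)$ a Hermitian $m\times m$ matrix.

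The crucial step is to diagonalize this Hermitian matrix-valued function measurably: there is a Borel map $\lambda\mapsto u(\lambda)$ into the unitary group of $M_m(\mathbb{C})$ with $u(\lambda)T(\lambda)u(\lambda)^*$ diagonal for almost every $\lambda$ in $\sigma(N_\nu)$ (a standard measurable diagonalization; see, e.g., \cite{Azoff_1}). Set $U\triangleq\int^{\oplus}_{\sigma(N_\nu)}u(\lambda)\,d\nu(\lambda)\in M_m(L^\infty(\nu))$ and $W\triangleq UV$. Because each $u(\lambda)$ commutes with $\lambda I_m$, we get $UN^{(m)}_\nu U^*=N^{(m)}_\nu$, hence $WM_fW^*=N^{(m)}_\nu$. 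Moreover $WM_hW^*=UTU^*$ is a self-adjoint decomposable operator with diagonal matrix values, so the same is true of every bounded Borel function of it; applying $\psi_{ij}$ yields
$$WM_{f_{ij}}W^*=\psi_{ij}(WM_hW^*)=\bigoplus_{k=1}^{m}M^{}_{f^{}_{k;ij}}$$
on $L^2(\nu)^{(m)}=\bigoplus_{k=1}^m L^2(\nu)$, with $f^{}_{k;ij}\in L^\infty(\nu)$ (boundedness is automatic, unitary conjugation being isometric). From $WM_fW^*=N^{(m)}_\nu$ we read off $M^{}_{f^{}_{k;ii}}=N_\nu$, and $f^{}_{k;ij}=0$ for $i>j$ since $M_{f_{ij}}=0$ there.

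Finally, conjugating $A$ by $W^{(n)}$ replaces the $(i,j)$ block of $A$ by $\bigoplus_{k=1}^m M^{}_{f^{}_{k;ij}}$; identifying $(L^2(\nu)^{(m)})^{(n)}$ with $(L^2(\nu)^{(n)})^{(m)}$ via the canonical permutation of the summands (index $k$ before index $i$) one obtains $W^{(n)}A(W^*)^{(n)}=\bigoplus_{k=1}^{m}(M^{}_{f^{}_{k;ij}})_{i,j=1}^{n}$, which is exactly the right-hand side of $(3.2)$. The only genuinely nontrivial ingredient is the Borel choice of the diagonalizing unitaries $u(\lambda)$; everything else is bookkeeping with commutants and direct integral decompositions. (If one also wants $f^{}_{k;i,i+1}\neq 0$ almost everywhere, so that each summand in $(3.2)$ is strongly irreducible $\nu$-a.e.\ by $(\cite{Shi_2}$, Lemma $3.1)$, this is automatic: $M_{f_{i,i+1}}$ is injective with dense range, hence so is each $M^{}_{f^{}_{k;i,i+1}}$.)
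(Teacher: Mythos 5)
Your argument is correct, and it reaches the same intermediate waypoint as the paper -- after conjugating by $V$ every entry lands in $\{N^{(m)}_{\nu}\}^{\prime}=M_m(L^{\infty}(\nu))$, and the whole problem is to produce one decomposable unitary in that commutant which simultaneously diagonalizes all the transported entries -- but the mechanism you use for that step is genuinely different. The paper works with the masa $VL^{\infty}(\mu)V^{*}\subseteq\{N^{(m)}_{\nu}\}^{\prime}$ as a bounded maximal abelian set of \emph{idempotents}: it extracts, via Lemma $2.3$, $m$ mutually orthogonal projections whose fibres have rank one almost everywhere, and then diagonalizes them with the Borel-selection machinery of (\cite{Azoff_1}) as in the proof of Theorem $1.2$. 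You instead invoke von Neumann's single-generator theorem to write the abelian algebra of entries as $\{M_h\}^{\prime\prime}$ with $h$ real-valued, diagonalize the single Hermitian matrix-valued function $T(\lambda)=VM_hV^{*}(\lambda)$ measurably, and let the Borel functional calculus ($W^{*}(S)=\{g(S):g\text{ bounded Borel}\}$ on a separable space, and its compatibility with direct integrals) carry the diagonalization to every entry for free. Both routes rest on the same Azoff-type measurable diagonalization of Hermitian fibres; yours trades the paper's Lemma $2.3$ for two classical facts about abelian von Neumann algebras, which makes the simultaneity of the diagonalization automatic rather than something extracted from the projection lattice. You are also more explicit than the paper about the final reindexing of $(L^{2}(\nu)^{(m)})^{(n)}$ as $(L^{2}(\nu)^{(n)})^{(m)}$, and about why $UN^{(m)}_{\nu}U^{*}=N^{(m)}_{\nu}$; the closing parenthetical on $f^{}_{k;i,i+1}\neq 0$ is not needed for the statement but is correct as written.
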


\begin{proof}
The multiplication operators $M^{}_{f^{}_{}}$ and $M^{}_{f^{}_{ij}}$
are in $\{N^{}_{\mu^{}_{}}\}^{\prime}$ for $i<j$. By the assumption
in $(3.1)$, there exists a unitary operator $V$ such that
$VM^{}_{f^{}_{n}}V^{*}=N^{(m)}_{\nu^{}_{}}$. Then we obtain the
equality
$$N^{(m)}_{\nu^{}_{}}(VM^{}_{f^{}_{ij}}V^{*})
=(VM^{}_{f^{}_{ij}}V^{*})N^{(m)}_{\nu^{}_{}}.\eqno(3.3)$$ Therefore,
the operator $VM^{}_{f^{}_{ij}}V^{*}$ can be expressed in the form
$$VM^{}_{f^{}_{ij}}V^{*}=
\begin{pmatrix}
M^{}_{\phi^{}_{ij;11}}&\cdots&M^{}_{\phi^{}_{ij;1m}}\\
\vdots&\ddots&\vdots\\
M^{}_{\phi^{}_{ij;m1}}&\cdots&M^{}_{\phi^{}_{ij;mm}}\\
\end{pmatrix}^{}_{m\times m}, \eqno(3.4)$$
where $\phi^{}_{ij;st}$ is in $L^{\infty}(\nu^{}_{})$ and
$M^{}_{\phi^{}_{ij;st}}$ is in $\{N^{}_{\nu^{}_{}}\}^{\prime}$ for
$1\leq s,t\leq m$. The motivation is to find a unitary operator such
that every $VM^{}_{f^{}_{ij}}V^{*}$ is unitarily equivalent to a
diagonal operator in the commutant
$\{N^{(m)}_{\nu^{}_{}}\}^{\prime}$. We observe that
$$\{VN^{}_{\mu^{}_{}}V^{*}\}^{\prime\prime}(=\{VN^{}_{\mu^{}_{}}V^{*}\}^{\prime})
\subseteq\{N^{(m)}_{\nu^{}_{}}\}^{\prime}. \eqno{(3.5)}$$ Let
$\mathscr{E}$ be the set of projections in
$\{VN^{}_{\mu^{}_{}}V^{*}\}^{\prime\prime}$. Then $\mathscr{E}$ is a
maximal abelian set of projections in
$\{N^{(m)}_{\nu^{}_{}}\}^{\prime}$. By (\cite{Shi_1}, Proposition
$4.1$), we obtain that $\mathscr{E}$ is a bounded maximal abelian
set of idempotents in $\{N^{(m)}_{\nu^{}_{}}\}^{\prime}$. As an
application of Lemma $2.3$, there exist $m$ projections
$\{E^{}_{i}\}^{m}_{i=1}$ in $\mathscr{E}$ such that
$E^{}_{i}E^{}_{j}=0$ for $i\neq j$ and the equality
$\mbox{rank}(E^{}_{i}(\lambda))=1$ holds for almost every $\lambda$
in the support of $\nu^{}_{}$. Then by (\cite{Azoff_1},Corollary
$2$) and a similar proof of Theorem $1.2$, we obtain a unitary
operator $V^{}_{1}$ in $\{N^{(m)}_{\nu^{}_{}}\}^{\prime}$ such that
every projection in $V^{}_{1}\mathscr{E}V^{*}_{1}$ is diagonal.
Therefore $(V^{}_{1}V^{}_{})^{(n)}_{}$ is as required.
\end{proof}

By Proposition $3.1$, we observe that $B^{(m^{}_{})}_{}$ for $B$ as
in $(2.51)$ is a special form of $(3.2)$. When we consider a similar
result as Theorem $1.2$, the following example makes the calculation
appear to be more complicated. Let the operators $X$ and $Y$ be in
the form
$$X=\begin{pmatrix}
N^{}_{\mu^{}_{}}&I\\
0&N^{}_{\mu^{}_{}}\\
\end{pmatrix},\quad
Y=\begin{pmatrix}
N^{}_{\mu^{}_{}}&N^{}_{\mu^{}_{}}\\
0&N^{}_{\mu^{}_{}}\\
\end{pmatrix}, \eqno(3.6)$$
Then $X$ and $Y$ are not similar in
$M^{}_{2}(L^{\infty}(\mu^{}_{}))$, where the regular Borel measure
$\mu$ is supported on the interval $[-1,1]$. By Lemma $2.1$, if $Z$
is a bounded linear operator such that $XZ=ZY$, then $Z$ is in the
form
$$Z=\begin{pmatrix}
M^{}_{f^{}_{1}}&M^{}_{f^{}_{12}}\\
0&M^{}_{f^{}_{2}}\\
\end{pmatrix}, \eqno(3.7)$$
where every entry of $Z$ is in $\{N^{}_{\mu^{}_{}}\}^{\prime}$. And
$M^{}_{f^{}_{2}}=M^{}_{f^{}_{1}}N^{}_{\mu^{}_{}}$. Therefore, the
operator $Z$ is not invertible. In (\cite{Deckard_1}, \S $2$), a
similar example was provided. Define $T=X\oplus Y$. Let $E$ be the
projection $I\oplus 0$ such that $T|^{}_{{\rm{ran}}E}=X$ and $F$ be
the projection $0\oplus I$ such that $T|^{}_{{\rm{ran}}F}=Y$. Then
$E$ is not similar to $F$ in $\{T\}^{\prime}$ corresponding to the
above discussion. This operator is different from the operator
investigated in (\cite{Shi_2}, Theorem $3.3$).

In the following we show that the multiplicity ``$\infty$'' is not
what we want in the decomposition of an operator as in the form of
$(1.16)$.

\begin{proposition}
Let $A^{}_{}$ be an operator as in $(2.50)$
$$A^{(\infty)}_{}=\begin{pmatrix}
N^{(\infty)}_{\mu^{}_{}}&M^{(\infty)}_{f^{}_{12}}&M^{(\infty)}_{f^{}_{13}}&\cdots&M^{(\infty)}_{f^{}_{1n}}\\
0&N^{(\infty)}_{\mu^{}_{}}&M^{(\infty)}_{f^{}_{23}}&\cdots&M^{(\infty)}_{f^{}_{2n}}\\
0&0&N^{(\infty)}_{\mu^{}_{}}&\cdots&M^{(\infty)}_{f^{}_{3n}}\\
\vdots&\vdots&\vdots&\ddots&\vdots\\
0&0&0&\cdots&N^{(\infty)}_{\mu^{}_{}}\\
\end{pmatrix}_{n\times n}, \eqno(3.8)$$
where $\mu$ is stated as in $(1.7)$. Then the strongly irreducible
decomposition of $A^{(\infty)}_{}$ is not unique up to similarity.
\end{proposition}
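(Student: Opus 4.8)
The plan is to exhibit two bounded maximal abelian sets of idempotents in $\{A^{(\infty)}\}'$ which are not conjugate by any invertible element of $\{A^{(\infty)}\}'$; by the definition of $\mathscr S^{}_{U}$ this is exactly the assertion that the strongly irreducible decomposition of $A^{(\infty)}$ is not unique up to similarity. First I would describe the commutant: writing $A^{(\infty)}=A\otimes I^{}_{\ell^2}$ on $\mathscr H^{}_{A}\otimes\ell^2$ and representing an operator as an $\mathbb N\times\mathbb N$ matrix over $B(\mathscr H^{}_{A})$, one gets $\{A^{(\infty)}\}'=M^{}_{\mathbb N}(\{A\}')$; feeding in the form $(2.3)$ of $\{A\}'$ from (\cite{Shi_2}, Lemma~$3.2$) and interchanging the two matrix structures shows that $\{A^{(\infty)}\}'$ is exactly the algebra of upper triangular $n\times n$ matrices over $\mathscr A:=\{N^{(\infty)}_{\mu}\}'=L^{\infty}(\mu)\bar\otimes B(\ell^2)$ whose $n$ diagonal entries all coincide. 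Here $\mathscr A$ is the von Neumann algebra of operators on $L^{2}(\mu)^{(\infty)}$ that are decomposable with respect to the diagonal algebra $\mathscr D\cong L^{\infty}(\mu)$, and $\mathscr D$ is the center of $\mathscr A$.

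For each maximal abelian $*$-subalgebra $\mathscr M$ of $B(\ell^2)$, let $\mathscr A^{}_{\mathscr M}$ be the masa of $\mathscr A$ consisting of those decomposable operators whose fibre $S(\lambda)$ lies in $\mathscr M$ for almost every $\lambda$, and set
$$\mathscr P^{}_{\mathscr M}:=\{\,\mathrm{diag}(E,\dots,E)\in\{A^{(\infty)}\}'\ :\ E\in\mathscr A^{}_{\mathscr M},\ E^{2}=E\,\}.$$
I would check that $\mathscr P^{}_{\mathscr M}$ is a bounded maximal abelian set of idempotents in $\{A^{(\infty)}\}'$: if $(T^{}_{ij})$ in the commutant commutes with all of $\mathscr P^{}_{\mathscr M}$, then its common diagonal entry commutes with the masa $\mathscr A^{}_{\mathscr M}$, hence lies in $\mathscr A^{}_{\mathscr M}$; the strictly upper entries are then pushed into $\mathscr A^{}_{\mathscr M}$ and, by the commutator computation already used around $(2.30)$ and in the proof of Lemma~$2.2$, forced to vanish, so $(T^{}_{ij})\in\mathscr P^{}_{\mathscr M}$. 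Each $\mathscr P^{}_{\mathscr M}$ thus records a strongly irreducible decomposition of $A^{(\infty)}$ whose fibres are unitarily equivalent to the fibres $A(\lambda)$ of $A$ (strongly irreducible $\mu$-a.e.\ by $(2.2)$ and (\cite{Shi_2}, Lemma~$3.1$)); for $\mathscr M$ the atomic masa this is the ``standard'' decomposition into copies of $A$.

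The decisive point is that $\mathscr P^{}_{\mathscr M^{}_{1}}$ and $\mathscr P^{}_{\mathscr M^{}_{2}}$ are conjugate in $\{A^{(\infty)}\}'$ only if $\mathscr M^{}_{1}$ and $\mathscr M^{}_{2}$ are conjugate masas in $B(\ell^2)$. For if $V\in\{A^{(\infty)}\}'$ is invertible with $V\mathscr P^{}_{\mathscr M^{}_{1}}V^{-1}=\mathscr P^{}_{\mathscr M^{}_{2}}$, then $V$ is upper triangular over $\mathscr A$ with common diagonal entry $S$, invertible in $\mathscr A$; comparing the diagonal blocks of $V\,\mathrm{diag}(E,\dots,E)\,V^{-1}$, and using that masas are generated by their projections, gives $S\mathscr A^{}_{\mathscr M^{}_{1}}S^{-1}=\mathscr A^{}_{\mathscr M^{}_{2}}$. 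Since conjugation by $S\in\mathscr A$ fixes the center $\mathscr D$ pointwise, it disintegrates into fibrewise conjugations, whence $S(\lambda)\mathscr M^{}_{1}S(\lambda)^{-1}=\mathscr M^{}_{2}$ for almost every $\lambda$, so $\mathscr M^{}_{1}$ and $\mathscr M^{}_{2}$ are similar in $B(\ell^2)$. To finish, choose $\mathscr M^{}_{1}$ to be the atomic masa $(\cong\ell^{\infty})$ and $\mathscr M^{}_{2}$ a diffuse masa $(\cong L^{\infty}[0,1])$ of $B(\ell^2)$: these are not even isomorphic as algebras, so $\mathscr P^{}_{\mathscr M^{}_{1}}$ and $\mathscr P^{}_{\mathscr M^{}_{2}}$ are non-conjugate bounded maximal abelian sets of idempotents in $\{A^{(\infty)}\}'$, which is the assertion. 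When $A=\left(\begin{smallmatrix}N^{}_{\mu}&I\\0&N^{}_{\mu}\end{smallmatrix}\right)$, this recovers the failure of $(1.1)$ for the operator $C$ of $(1.8)$.

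The step I expect to require real work is the maximality verification in the second paragraph — propagating the masa condition up the strictly upper-triangular part of $\{A^{(\infty)}\}'$ — which is precisely where the hypothesis $(2.2)$ enters (through (\cite{Shi_2}, Lemma~$3.2$)); the remaining steps are short and structural.
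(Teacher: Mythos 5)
Your proposal is correct and follows essentially the same route as the paper: both construct the two competing bounded maximal abelian sets of idempotents by tensoring $L^{\infty}(\mu)$ with the atomic masa and with a diffuse masa of $\mathscr{L}(l^{2})$, place them diagonally, and verify maximality by pushing the commutation relation up the strictly upper-triangular part of $\{A^{(\infty)}\}^{\prime}$. The only (cosmetic) difference is the final non-conjugacy step: the paper compares fibrewise ranks of projections ($n$ versus $0$ or $\infty$), while you reduce to fibrewise non-isomorphism of the atomic and diffuse masas — the same underlying obstruction.
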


\begin{proof}
We need to construct two bounded maximal abelian sets of idempotents
in $\{A^{(\infty)}_{}\}^{\prime}$ such that they are not similar to
each other.

We write $N^{(\infty)}_{\mu^{}_{}}$ in the form
$N_{\mu^{}_{}}\otimes I^{}_{l^{2}}$, where $I^{}_{l^{2}}$ is the
identity operator on $l^{2}$. Denote by $\mathscr{P}$ the set of all
the spectral projections of $N^{}_{\mu^{}_{}}$. This set forms a
bounded maximal abelian set of idempotents in
$\{N^{}_{\mu^{}_{}}\}^{\prime}$. Let $\{e^{}_{k}\}^{\infty}_{k=1}$
be an orthonormal basis for $l^{2}$. Denote by $E^{}_{k}$ the
projection such that $\mbox{ran}E^{}_{k}=\{\lambda
e^{}_{k}:\lambda\in\mathbb{C}\}$. Define $$\mathscr
{Q}^{}_{1}\triangleq\{P\in\mathscr{L}(l^{2}):P=P^{*}=P^{2}
\in\{E^{}_{k}:k\in\mathbb{N}\}^{\prime\prime}\}. \eqno(3.9)$$ Denote
by $\chi^{}_{S}$ the characteristic function for a Borel subset $S$
in $\sigma(N^{}_{\mu})$ and define $$\hat{\mathscr
{Q}}^{}_{2}\triangleq\{M_{\chi^{}_{S}}\in\mathscr{L}(L^{2}(\mu)):
S\subseteq\sigma(N^{}_{\mu})\mbox{ is Borel}\}. \eqno(3.10)$$ There
is a unitary operator $U:L^{2}[0,1]\rightarrow l^{2}$ such that
$UPU^{*}\in\mathscr{L}(l^{2})$ for every $P\in\hat{\mathscr
{Q}}^{}_{2}$. The sets ${\mathscr {Q}}^{}_{2}\triangleq
U\hat{\mathscr {Q}}^{}_{2}U^{*}$ and $\mathscr {Q}^{}_{1}$ are two
bounded maximal abelian sets of idempotents in $\mathscr{L}(l^{2})$
but they are not unitarily equivalent.

The fact that $W^{*}(\mathscr {P})\otimes W^{*}(\mathscr
{Q}^{}_{1})$ and $W^{*}(\mathscr {P})\otimes W^{*}(\mathscr
{Q}^{}_{2})$ are both maximal abelian von Neumann algebras yields
that
$$\mathscr {F}^{}_{1}\triangleq\{P\in W^{*}(\mathscr {P})\otimes
W^{*}(\mathscr {Q}^{}_{1}):P=P^{*}=P^{2}\} \eqno(3.11)$$ and
$$\mathscr {F}^{}_{2}\triangleq\{P\in W^{*}(\mathscr {P})\otimes
W^{*}(\mathscr {Q}^{}_{2}):P=P^{*}=P^{2}\} \eqno(3.12)$$ are both
bounded maximal abelian sets of idempotents in
$\{N_{\mu^{}_{}}\otimes
I^{}_{l^{2}}\}^{\prime}=L^{\infty}(\mu^{}_{})\otimes\mathscr{L}(l^{2})$.

We assert that $\mathscr {F}^{(n)}_{i}$ is a bounded maximal abelian
set of idempotents in $\{A^{(\infty)}_{}\}^{\prime}$ for $i=1,2$.

An operator $X$ in $\{A^{(\infty)}_{}\}^{\prime}$ can be expressed
in the form
$$X=\begin{pmatrix}
X^{}_{11}&X^{}_{12}&X^{}_{13}&\cdots&X^{}_{1n}\\
X^{}_{21}&X^{}_{22}&X^{}_{23}&\cdots&X^{}_{2n}\\
X^{}_{31}&X^{}_{32}&X^{}_{33}&\cdots&X^{}_{3n}\\
\vdots&\vdots&\vdots&\ddots&\vdots\\
X^{}_{n1}&X^{}_{n2}&X^{}_{n3}&\cdots&X^{}_{nn}\\
\end{pmatrix}^{}_{n\times n}. \eqno (3.13)$$ By a similar proof of
Lemma $2.1$, we obtain that $X^{}_{ij}$ is in
$\{N_{\mu^{}_{}}\otimes I^{}_{l^{2}}\}^{\prime}$ and the equation
$X^{}_{ij}=0$ holds for $i>j$ and $X^{}_{ii}=X^{}_{11}$ for
$i=2,\ldots,n$ in $(3.13)$. Furthermore, if $X$ as in $(3.13)$ is an
idempotent, then so is every main diagonal entry $X^{}_{ii}$ of $X$.

We assume that $X$ is an idempotent in
$\{A^{(\infty)}_{}\}^{\prime}$ and commutes with $\mathscr
{F}^{(n)}_{1}$. Hence $X^{}_{ii}$ commutes with $\mathscr
{F}^{}_{1}$. The fact that $\mathscr {F}^{}_{1}$ is a maximal
abelian set of idempotents implies that $X^{}_{ii}$ belongs to
$\mathscr {F}^{}_{1}$. Thus $X^{}_{ii}$ commutes with $X^{}_{ij}$.
For the $1$-diagonal entries, the equation
$2X^{}_{ii}X^{}_{i,i+1}-X^{}_{i,i+1}=0$ yields $X^{}_{i,i+1}=0$, for
$i=1,\ldots,n-1$. By this way, the $k$-diagonal entries of $X$ are
all zero, for $k=2,\ldots,n$. Therefore $X$ is in $\mathscr
{F}^{(n)}_{1}$. Both $\mathscr {F}^{(n)}_{1}$ and $\mathscr
{F}^{(n)}_{2}$ are bounded maximal abelian sets of idempotents in
$\{A^{(\infty)}_{n}\}^{\prime}$.

Every operator $X$ in $\{A^{(\infty)}_{n}\}^{\prime}$ can be
expressed in the form
$$X=\int_{\sigma(N^{}_{\mu^{}_{}})} X(\lambda) d\mu^{}_{}(\lambda). \eqno(3.14)$$
Suppose that there is an invertible operator $X$ in
$\{A^{(\infty)}_{n}\}^{\prime}$ such that $$X\mathscr
{F}^{(n)}_{2}X^{-1}=\mathscr {F}^{(n)}_{1}.$$ For each $P$ in
$\mathscr {F}^{(n)}_{2}$, the projection $P(\lambda)$ is either of
rank $\infty$ or $0$, for almost every $\lambda$ in
$\sigma(N^{}_{\mu^{}_{}})$. But there exists an projection $Q$ in
$\mathscr {F}^{(n)}_{1}$ such that $Q(\lambda)$ is of rank $n$, for
almost every $\lambda$ in $\sigma(N^{}_{\mu^{}_{}})$. This is a
contradiction. Therefore $\mathscr {F}^{(n)}_{1}$ and $\mathscr
{F}^{(n)}_{2}$ are not similar in $\{A^{(\infty)}_{}\}^{\prime}$.
\end{proof}

As an application of the preceding proposition, we obtain the
following corollary.

\begin{corollary}
Let $A$ be an operator assumed as in $(1.16)$. If the multiplicity
function $m^{}_{{{f^{}_{}}}}$ of the main diagonal operator
$M^{}_{f}$ takes finitely many values and there exists a bounded
$\mathbb{N}$-valued simple function $r^{}_{A}$ on $\sigma(A)$ such
that
$$K^{}_{0}(\{A\}^{\prime})\cong\{\phi(\lambda)\in\mathbb
{Z}^{(r^{}_{A}(\lambda))}:\phi\ {\mbox{is\ bounded Borel on
}}\sigma(A)\}, \eqno{(3.15)}$$ then the strongly irreducible
decomposition of $A$ is unique up to similarity.
\end{corollary}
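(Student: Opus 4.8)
The plan is to argue in three moves: first use the $K^{}_{0}$-hypothesis to force the multiplicity function $m^{}_{f}$ of $M^{}_{f}$ to take only finite values; then split $A$ along the (finitely many) level sets of $m^{}_{f}$ and apply Proposition $3.1$ on each piece, putting $A$ into the shape of a finite orthogonal direct sum of operators of the form $(2.1)$ over pairwise mutually singular scalar-valued spectral measures; and finally invoke the analysis of Section $2$. The first move is where the work is.

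Suppose, towards a contradiction, that $m^{}_{f}=\infty$ on a Borel set $S\subseteq\sigma(M^{}_{f})$ with $\nu(S)>0$, $\nu=\mu\circ f^{-1}$. The spectral subspace of $M^{}_{f}$ on which the multiplicity equals $\infty$ reduces every entry $M^{}_{f^{}_{ij}}$, so $A$ carries a reducing summand $A^{}_{S}$ that is an upper triangular $n\times n$ operator with $N^{}_{\nu|_{S}}\otimes I^{}_{l^{2}}$ on its diagonal and with its superdiagonal entries non-vanishing a.e. Running the commutant computation of Proposition $3.2$ (apply Lemma $2.1$ entrywise, then use the spectral-radical identity $(2.29)$) shows that $\{A^{}_{S}\}^{\prime}$ is an upper triangular matrix algebra with equal diagonal entries in $\{N^{}_{\nu|_{S}}\otimes I^{}_{l^{2}}\}^{\prime}$ and with its strictly upper triangular part a nilpotent two-sided ideal contained in the Jacobson radical; hence $K^{}_{0}(\{A^{}_{S}\}^{\prime})\cong K^{}_{0}(\mathscr{N})$, where $\mathscr{N}$ is the relative commutant of the superdiagonal entries inside the von Neumann algebra $\{N^{}_{\nu|_{S}}\otimes I^{}_{l^{2}}\}^{\prime}=\mathscr{M}\,\bar{\otimes}\,\mathscr{L}(l^{2})$ with $\mathscr{M}$ abelian. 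This $K^{}_{0}$-group is incompatible with the hypothesis: when the $M^{}_{f^{}_{ij}}$ restrict to operators central in $\{N^{}_{\nu|_{S}}\otimes I^{}_{l^{2}}\}^{\prime}$ one has $\mathscr{N}=\{A^{}_{n}\}^{\prime}\bar{\otimes}\mathscr{L}(l^{2})$ and $K^{}_{0}(\mathscr{N})=0$ by an Eilenberg-swindle argument, while in general $K^{}_{0}(\mathscr{N})$ is (an ordered group of) bounded integer-valued Borel functions over a base space whose fibres over $S$ are infinite; neither is order-isomorphic to $\{\phi\in\mathbb{Z}^{(r)}:\phi\text{ bounded Borel on }\sigma(A)\}$ restricted to $S$ with $r$ everywhere in $\mathbb{N}$. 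Since $K^{}_{0}$ is additive over the finitely many reducing summands of $A$ picked out by the distinct values of $m^{}_{f}$, this contradicts the hypothesis, and therefore $m^{}_{f}$ takes only finitely many finite values.

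Now write $\sigma(M^{}_{f})=\bigsqcup^{N}_{j=1}\Gamma^{}_{j}$ (a Borel partition) with $m^{}_{f}\equiv m^{}_{j}\in\mathbb{N}$ on $\Gamma^{}_{j}$, so that $A\cong\bigoplus^{N}_{j=1}A^{}_{j}$ with $A^{}_{j}$ of the form $(1.16)$, main diagonal of uniform multiplicity $m^{}_{j}$, and the measures $\nu^{}_{j}=\nu|_{\Gamma^{}_{j}}$ pairwise mutually singular. Proposition $3.1$ produces unitaries $W^{}_{j}$ with $W^{(n)}_{j}A^{}_{j}(W^{*}_{j})^{(n)}=\bigoplus^{m^{}_{j}}_{k=1}C^{}_{j,k}$, where each $C^{}_{j,k}$ is an $n\times n$ upper triangular operator with $N^{}_{\nu^{}_{j}}$ on the diagonal; its superdiagonal entry $(i,i+1)$ is non-vanishing $\nu^{}_{j}$-a.e., since under the fibrewise disintegration of $M^{}_{f^{}_{i,i+1}}$ over $f$ its value at $\lambda$ is one of the values of $f^{}_{i,i+1}$ on the fibre $f^{-1}(\lambda)$, which is nonzero for a.e.\ $\lambda$ by the standing hypothesis on $(1.16)$. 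Thus each $C^{}_{j,k}$ is of the form $(2.1)$--$(2.2)$, hence strongly irreducible $\nu^{}_{j}$-a.e.\ on $\mathrm{supp}(\nu^{}_{j})$ by \cite{Shi_2}, Lemma $3.1$, and $A\cong\bigoplus^{N}_{j=1}\bigoplus^{m^{}_{j}}_{k=1}C^{}_{j,k}$. This is precisely the type of operator handled in the proof of Theorem $1.2$, extended to pairwise mutually singular measures as in the last paragraph of that proof; Lemmas $2.1$--$2.3$ use only that the summands of a fixed size share the same normal diagonal and satisfy the superdiagonal non-vanishing condition, never that they are mutually equal, so they apply verbatim, every bounded maximal abelian set of idempotents in $\{A\}^{\prime}$ is conjugate by an invertible element of $\{A\}^{\prime}$ to the standard one, and the strongly irreducible decomposition of $A$ is unique up to similarity. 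The only genuine obstacle is the first move: although every fibre $C^{}_{j,k}(\lambda)$ is already similar to a single $n\times n$ Jordan block, the superdiagonal multiplication operators need not be invertible, so one cannot bypass Section $2$ by conjugating $A$ into a constant-Jordan-block direct integral, and infinite multiplicity — which Proposition $3.2$ shows destroys uniqueness — must be excluded by pinning down $K^{}_{0}$ of the commutant of $A^{}_{S}$ precisely enough to see it cannot have a bounded $\mathbb{N}$-valued local rank.
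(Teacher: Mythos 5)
Your overall architecture (use the $K_{0}$ hypothesis to exclude infinite multiplicity, split along the level sets of $m_{f}$, diagonalize by Proposition 3.1, then quote Section 2) is the route the paper evidently intends; the paper itself offers no argument beyond ``as an application of the preceding proposition.'' But there is a genuine gap in your third move. After Proposition 3.1 you arrive at $A\cong\bigoplus_{j}\bigoplus_{k=1}^{m_{j}}C_{j,k}$, where the $C_{j,k}$ of a fixed size share the diagonal $N_{\nu_{j}}$ and have non-vanishing superdiagonals, but need \emph{not} be mutually similar. This is not the class covered by Theorem 1.2: the forms $(1.10)$--$(1.14)$ require the equal-size summands to be literal powers $A_{n_{i}}^{(m_{i})}$ of a single operator. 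Your claim that Lemmas 2.1--2.3 ``never use that the summands are mutually equal'' is false: the final step of Lemma 2.2 explicitly invokes the fact that the diagonal entries of $B_{ii;st}$ coincide, which holds because an intertwiner between two copies of the \emph{same} $A_{n_{i}}$ lies in $\{A_{n_{i}}\}^{\prime}$ and hence has constant diagonal by $(2.3)$. For an intertwiner between two non-identical summands of the same size the diagonal entries are instead coupled by $\psi_{jj}f_{j,j+1}=g_{j,j+1}\psi_{j+1,j+1}$; the paper's own example $(3.6)$--$(3.7)$, where $M_{f_{2}}=M_{f_{1}}N_{\mu}$, is exactly such a pair, and the paper states that this situation ``makes the calculation appear to be more complicated'' and is not the operator treated in \cite{Shi_2}. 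So the reduction to Section 2 does not go through verbatim: you would need to invoke the $K_{0}$ hypothesis a second time, in the spirit of Lemma 2.4 and Theorem 1.4, to force the equal-size summands $C_{j,1},\dots,C_{j,m_{j}}$ to be mutually similar (or to show directly that the hypothesis excludes configurations like $X\oplus Y$ of $(3.6)$), and you never do this.

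Secondarily, your first move is only a sketch. The identification $K_{0}(\{A_{S}\}^{\prime})\cong K_{0}(\mathscr{N})$ requires an analogue of Lemma 2.2 (every idempotent similar to a diagonal one, so that the quotient map induces an isomorphism on $K_{0}$ as in $(2.45)$--$(2.47)$) in the infinite-multiplicity setting, which you do not supply; and your dichotomy ``either $K_{0}(\mathscr{N})=0$ by a swindle, or it is a group of integer-valued functions with infinite fibres'' is asserted rather than proved in the second branch. Since the entire point of the $K_{0}$ hypothesis is to rule out the summand $A_{S}$, that computation has to be carried out, not gestured at.
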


\bibliographystyle{amsplain}

\end{document}